 \newtheorem{theorem}{Theorem}[section]
 \newtheorem{lem}[theorem]{Lemma}
 \newtheorem{prop}[theorem]{Proposition}
 \newtheorem{cor}[theorem]{Corollary}
\theoremstyle{remark} 
 \newtheorem{rem}{Remark}[section]
\newcommand{\N}{\mathbb{N}}
\newcommand{\R}{\mathbb{R}}
\newcommand{\C}{\mathbb{C}}
\newcommand{\Nz}{\mathbb{N}_0}
\newcommand{\Sc}{\mathcal{S}}
\newcommand{\id}{\mathrm{id}\,}
\newcommand{\F}{\mathcal{F}}
\newcommand{\Pt}{\partial_t}
\newcommand{\Px}{\partial_x}
\newcommand{\Pj}{\partial_j}
\newcommand{\Pxi}{\partial_\xi}
\newcommand{\Rep}{\mathrm{Re}\,}
\newcommand{\Imp}{\mathrm{Im}\,}
\newcommand{\vD}{\varDelta}
\newcommand{\tr}[1]{{}^t\hspace{-1mm}#1}
\newcommand{\diag}{\mathrm{diag}}
\newcommand{\bra}[1]{\left< #1 \right>}
\newcommand{\braa}[1]{\left( #1 \right)}
\newcommand{\brab}[1]{\left\{ #1 \right\}}
\newcommand{\brac}[1]{\left[ #1 \right]}
\newcommand{\ab}[1]{\left| #1 \right|}
\newcommand{\nr}[1]{\left\| #1 \right\|}
\newcommand{\mf}{\mathfrak}
\newcommand{\ep}{\epsilon}
\newcommand{\vep}{\varepsilon}
\newcommand{\vc}[3]{
\left(
  \begin{array}{c}
#1\\
#2\\
#3\\
  \end{array}
\right)
}
\newcommand{\vcc}[3]{
\left( #1,#2,#3 \right)}
\newcommand{\mtrx}[9]{
\left(
  \begin{array}{ccc}
#1&#2&#3\\
#4&#5&#6\\
#7&#8&#9\\
  \end{array}
\right)
}
\begin{document}
\title[Inverse Scattering Problems]{
Inverse scattering problems for the Hartree equation 
whose interaction potential decays rapidly.}
\author[H. Sasaki]{
Hironobu Sasaki$^*$ \\
Department of Mathematics and Informatics, 
Chiba University, 
263--8522, 
Japan.
}
\thanks{$^*$Supported by Grant-in-Aid 
for Young Scientists (B) 
No. 22740082.}
\thanks{$^*$email:
\texttt{sasaki@math.s.chiba-u.ac.jp}} 
\subjclass[2000]{35Q55, 81U40, 35P25}
\keywords{
Schr\"odinger equation; 
inverse scattering; 
interaction potential
} 
\maketitle

\begin{abstract}
We consider 
inverse scattering problems for 
the three-dimensional Hartree equation.
We prove that 
if the unknown interaction potential 
$V(x)$ of the equation satisfies 
some rapid decay condition,
then we can uniquely determine 
the exact value of 
$\Pxi^\alpha \widehat{V}(0)$ 
for any multi-index $\alpha$
by the knowledge of the scattering operator 
for the equation.
Furthermore,
we show some stability estimate for identifying 
$\Pxi^\alpha \widehat{V}(0)$.
\end{abstract}

\section{Introduction}\label{sec:intro}
This paper is concerned with 
inverse scattering problems for 
the three-dimensional Hartree equation
\begin{align}\label{eq:H}
i\Pt u + \Delta u = ( V \ast |u|^2 )u,
\quad
(t,x)\in \R\times \R^3.
\end{align} 
Here,
$u=u(t,x)$
is a complex-valued unknown function,
$i=\sqrt{-1}$,
$\Pt =\partial/\partial t$,
$\Delta = \partial_1^2 + \partial_2^2 + \partial_3^2$,
$\Pj =\partial/\partial x_j$ ($j=1,2,3$),
$V=V(x)$ is a real-valued measurable function,
the symbol $\ast$ denotes the convolution in $\R_x^3$. 
The equation (\ref{eq:H})
is approximately derived by 
the time-dependent multi-body Schr\"odinger equation
and the function $V(x)$ means the interaction potential 
for particles. \\

Our aim of this paper is 
to identify the unknown interaction potential $V(x)$
by the knowledge of the scattering operator.
In order to mention the existence of 
the scattering operator for the equation (\ref{eq:H}),
we introduce some notation.  
For $1\le p \le \infty$,
we denote the Lebesgue space $L^p(\R^3)$ and its norm 
by $L^p$ and $\| \cdot \|_p$,
respectively.  
For $1\le p \le \infty$,
we denote the usual Sobolev space
$W^1_p(\R^3)$ by $W^1_{p}$ 
and we put $H^1=W^1_2$.
For a Banach space $Y$ and for $1\le q \le \infty$, 
we put $C_b Y = C(\R;Y) \cap L^\infty(\R;Y)$ 
and 
$L^q Y = L^q(\R;Y)$.
For a Banach space $Y$ equipped with the norm $\| \cdot \|_Y$
and for $\delta>0$,
we define the closed ball $B(\delta,Y)$ by 
$B(\delta,Y) = \{ f\in Y;\ \| f \|_Y \le \delta \}$.
For $t\in \R$, 
the linear operator 
$U(t)$ is defined by 
$U(t)=e^{it\Delta}$.
Then $v(t)=U(t)\phi$ ($\phi\in L^2$) 
solves the free Schr\"odinger equation 
$i\Pt v + \Delta v = 0$
with the initial condition $v(0)=\phi$.  
From Strauss \cite{Strauss1981},
we see the existence of 
the scattering operator for (\ref{eq:H}):

\begin{theorem}\label{thm:direct}
Let $M>0$.
Assume that $V \in B(M,L^1)$.
Then there exist some positive numbers
$\delta_0(M)$ and $C_0$ 
depending only on $M$
such that the following properties hold:
\begin{enumerate}[(1)]
  \item 
For any $\phi_-\in B(\delta_0(M),H^1)$,
there uniquely exists 
a time-global solution
$u\in Z:=C_b H^1 \cap L^3 W^1_{18/5}$
to (\ref{eq:H}) 
such that  
$u(0)\in B(C_0\delta_0(M),H^1)$
and
\begin{align*}
\lim_{t\to -\infty} 
\nr{ u(t) - U(t)\phi_- }_{H^1}
=0.
\end{align*}  
Thus, 
the wave operator
\[
\mathcal{W}_-: 
B(\delta_0(M);H^1) 
\ni \phi_- \mapsto u(0) \in
B(C_0\delta_0(M);H^1)
\]
is well-defined. 
  \item 
For any 
$\phi_0\in B(C_0\delta_0(M);H^1)$,
there uniquely exist 
$v\in Z$ 
and 
a datum 
$\phi_+ \in B(C_0^2\delta_0(M);H^1)$
such that 
$v$ solves 
(\ref{eq:H})
with the initial condition 
$v(0)=\phi_0$
and 
satisfies  
\begin{align*}
\lim_{t\to +\infty} 
\nr{ v(t) - U(t)\phi_+ }_{H^1}
=0 .
\end{align*} 
Thus, 
the inverse wave operator 
\[
\mathcal{V}_+: 
B(C_0\delta_0(M);H^1)
\ni \phi_0 \mapsto \phi_+ \in
B(C_0^2\delta_0(M);H^1)
\]
and the scattering operator 
\[
S=\mathcal{V}_+ \circ \mathcal{W}_-: 
B(\delta_0(M);H^1) \to
B(C_0^2\delta_0(M);H^1)
\]
are well-defined. 
  \item 
We have
for any 
$\phi_- \in B(\delta_0(M),H^1)$,
\begin{align}\label{eqiv:S op}
S(\phi_-)
=
\phi_-
-
i\int_{\R}
U(-t) 
( V\ast |u(t)|^2 ) u(t) 
dt
\end{align} 
and 
\begin{align}\label{ineq:u - U phi}
\nr{ u(t) - U(t)\phi_- }_Z
\le C_0 
\nr{ \phi_- }_{H^1}^3,
\end{align} 
where $u(t)$ is the time-global solution 
mentioned in (1).
\end{enumerate}
\end{theorem}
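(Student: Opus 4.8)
The plan is to obtain all three parts from a single small-data fixed-point argument for the Duhamel formulation of (\ref{eq:H}), based on the Strichartz estimates for $U(t)=e^{it\Delta}$ in three space dimensions, as in Strauss \cite{Strauss1981}. Write $N(u)=(V\ast|u|^2)u$. The function space $Z=C_bH^1\cap L^3W^1_{18/5}$ is adapted to the pair $(3,18/5)$, which is Schr\"odinger-admissible in $\R^3$ since $2/3+3/(18/5)=3/2$; thus the homogeneous estimate gives $\nr{U(\cdot)\phi}_Z\le C\nr{\phi}_{H^1}$, and the inhomogeneous estimate, applied with the energy pair $(\infty,2)$ for the $C_bH^1$ part of the norm and with $(3,18/5)$ for the $L^3W^1_{18/5}$ part, gives
\[
\nr{ \int_{a}^{\cdot} U(\cdot-s)\,F(s)\,ds }_{Z}
\le C\,\nr{F}_{L^{3/2}W^1_{18/13}}
\qquad(a\in\{-\infty,0\}),
\]
after commuting $\nabla$ through $U$, where $18/13=(18/5)'$ and $C$ is an absolute constant; here and below $C$ denotes constants that may change from line to line.

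The analytic heart of the argument is the trilinear estimate
\[
\nr{ N(u)-N(w) }_{L^{3/2}W^1_{18/13}}
\le C M \,\braa{ \nr{u}_Z^2+\nr{w}_Z^2 }\,\nr{u-w}_Z ,
\]
valid whenever $\nr{V}_1\le M$; taking $w=0$ yields the cubic bound $\nr{N(u)}_{L^{3/2}W^1_{18/13}}\le CM\nr{u}_Z^3$. I would prove it by writing $N(u)-N(w)=\braa{V\ast(|u|^2-|w|^2)}u+(V\ast|w|^2)(u-w)$, distributing the spatial derivative via $\nabla N(u)=(V\ast\nabla|u|^2)u+(V\ast|u|^2)\nabla u$, and using the pointwise bounds $\ab{\nabla|u|^2}\le 2\ab{u}\ab{\nabla u}$ and $\ab{ |u|^2-|w|^2 }\le(\ab{u}+\ab{w})\ab{u-w}$. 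The convolution is absorbed by Young's inequality $\nr{V\ast g}_p\le\nr{V}_1\nr{g}_p$, and the remaining Hölder exponents are arranged so that two of the three factors lie in $L^3W^1_{18/5}$ (using $W^1_{18/5}\hookrightarrow L^q$ for every $q\in[18/5,\infty]$ in $\R^3$, since $18/5>3$) while the third lies in $C_bH^1\hookrightarrow C_bL^6$; the two ``$L^3$-in-time'' factors then match the target time exponent $3/2$. Granting this, $\Phi_-(u):=U(\cdot)\phi_- - i\int_{-\infty}^{\cdot}U(\cdot-s)N(u(s))\,ds$ is a contraction on a small ball of $Z$ once $\delta_0(M)$ is chosen small (depending on $M$ only through $\nr{V}_1\le M$), and its fixed point $u$ solves (\ref{eq:H}); moreover $u(t)-U(t)\phi_-=-i\int_{-\infty}^{t}U(t-s)N(u(s))\,ds\to0$ in $H^1$ as $t\to-\infty$ because the tail norm $\nr{N(u)}_{L^{3/2}((-\infty,t);W^1_{18/13})}$ vanishes. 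Setting $\mathcal{W}_-(\phi_-)=u(0)$ and using $\nr{u(0)}_{H^1}\le\nr{u}_{C_bH^1}\le\nr{u}_Z$ gives (1), uniqueness in $Z$ being standard. Part (2) is the same contraction run forward from $t=0$ with $\Phi_0(v):=U(\cdot)\phi_0-i\int_0^{\cdot}U(\cdot-s)N(v(s))\,ds$; one sets $\phi_+:=\phi_0-i\int_0^{\infty}U(-s)N(v(s))\,ds$, the integral converging in $H^1$ again by the vanishing of the tail Strichartz norm of $N(v)$, and then $\nr{v(t)-U(t)\phi_+}_{H^1}=\nr{\int_t^{\infty}U(t-s)N(v(s))\,ds}_{H^1}\to0$ for the same reason. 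Tracking the constants through the two contractions and collecting them into a single $C_0$ depending on $M$ only through $\nr{V}_1\le M$ (shrinking $\delta_0(M)$ as needed) yields the maps $\mathcal{W}_-,\mathcal{V}_+,S$ with the stated mapping properties, in particular the radii $C_0\delta_0(M)$ and $C_0^2\delta_0(M)$.

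For (3), apply the $H^1$-isometry $U(-t)$ to the integral equation of part (1), $U(-t)u(t)=\phi_- - i\int_{-\infty}^{t}U(-s)N(u(s))\,ds$, and let $t\to+\infty$: the right side converges in $H^1$ to $\phi_- - i\int_{\R}U(-s)N(u(s))\,ds$ (an absolutely convergent $H^1$-valued integral by the global Strichartz bound on $N(u)$), while, applying (2) with $\phi_0=u(0)$ so that the corresponding solution $v$ coincides with $u$ by uniqueness, the left side converges to $\phi_+=\mathcal{V}_+(\mathcal{W}_-(\phi_-))=S(\phi_-)$; equating the two limits is exactly (\ref{eqiv:S op}). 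Estimate (\ref{ineq:u - U phi}) then follows from $u-U(\cdot)\phi_-=-i\int_{-\infty}^{\cdot}U(\cdot-s)N(u(s))\,ds$ by the inhomogeneous estimate and the cubic bound, $\nr{u-U(\cdot)\phi_-}_Z\le C\nr{N(u)}_{L^{3/2}W^1_{18/13}}\le CM\nr{u}_Z^3$, combined with $\nr{u}_Z\le 2C\nr{\phi_-}_{H^1}$ from the contraction, after renaming the constant $C_0$. I expect the trilinear estimate to be the sole genuine obstacle: the bookkeeping of admissible pairs and of the Hölder and Young exponents needed to close the cubic nonlinearity in $Z$ with a clean quadratic Lipschitz gain, and the check that every constant depends on $V$ only through $\nr{V}_1\le M$, require care, whereas the contraction, the passages to the limit defining $\mathcal{W}_-,\mathcal{V}_+,S$, and the identities (\ref{eqiv:S op})--(\ref{ineq:u - U phi}) are then routine.
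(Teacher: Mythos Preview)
The paper does not prove this theorem itself; it states it as a known result with a citation to Strauss \cite{Strauss1981}. Your sketch correctly reconstructs the standard Strauss small-data argument---Strichartz estimates for the admissible pair $(3,18/5)$, a trilinear bound via Young's inequality $\nr{V\ast g}_p\le\nr{V}_1\nr{g}_p$ and Sobolev embeddings, and a contraction on a small ball in $Z$---which is exactly the approach the paper's citation points to, and your derivations of (\ref{eqiv:S op}) and (\ref{ineq:u - U phi}) are the routine ones.
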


\begin{rem}\label{rem:direct}
For the equation (\ref{eq:H}),
the scattering operator is well-defined 
on some neighborhood of 0 
in some suitable Hilbert space 
if the interaction potential $V$ satisfies either 
$
\sup_{x\in \R^3}
|x|^\gamma
\ab{V(x)} 
<\infty
$
for some $\gamma\in (1,3)$ or 
$V\in L^r$
for some $r\in [1,3)$.
For a proof, 
see
\cite{HayashiTsutsumi,Mochizuki,Strauss1981}.
\end{rem}

The inverse scattering problem 
for the perturbed Schr\"odinger equation  
is to recover the perturbed term   
by applying the knowledge of 
scattering states.
Before we introduce our main results,
we first review 
some known results of 
inverse scattering problems for 
$n$-dimensional 
nonlinear Schr\"odinger equations
briefly.
Strauss \cite{Strauss1974}
considered the nonlinear
Schr\"odinger equation
\begin{align*}
i\Pt u + \Delta u
=
W|u|^{p-1}u,
\quad
(t,x)\in \R \times \R^n.
\end{align*} 
Here, 
$p$ is a given number and 
$W=W(x)$ is unknown.
It was proved that 
if $p$ and $W$ satisfy
suitable conditions,
then the unknown $W$ is uniquely reconstructed by 
\begin{align*}
W(x_0)
=
\frac{\displaystyle{
\lim_{R\to \infty}R^{\,n+2}
K_p  \brac{\phi_{R,x_0}}
}}{\displaystyle{
\int_{\R\times \R^n}
\ab{U(t)\phi(x)}^{p+1}
d(t,x)
}}
,
\quad
x_0 \in \R^n,\
\phi\in H^1(\R^n) \cap L^{1+1/p}(\R^n)\setminus \{ 0 \},
\end{align*} 
where 
$\phi_{R,x_0}(x)=\phi(R(x-x_0))$
($R>0$, $x\in \R^n$)
and
\begin{align}\label{SAL}
K_p[\phi]
=
\lim_{\vep\to 0}
i\vep^{-p}
\bra{
(S-\id)(\vep\phi),\phi
}_{L^2(\R^n)},
\end{align} 
which is called 
the small amplitude limit,
the notation 
``$\id$" is the identity mapping.
Later, Weder 
\cite{Weder1997,Weder2000,
Weder2001-1,Weder2001-2,Weder2005-1,Weder2005-2}
proved that a more general class of nonlinearities 
is uniquely reconstructed, 
and moreover, 
a method is given for the unique reconstruction 
of the potential that acts 
as a linear operator.

Unfortunately,    
the above methods to obtain the reconstruction formula 
are not applicable to 
the case of the Hartree term 
$(V\ast |u|^2) u$
(for details, see, e.g. Section 1 of \cite{Sasaki2008}). 
Watanabe \cite{Watanabe2001-1,Watanabe2001-2,
Watanabe2002,Watanabe2007-1} 
studied 
the Hartree equation with a potential  
\begin{align*}
i\Pt u + \Delta u 
=
W_1 u + (W_2 \ast |u|^2)u,
\quad
(t,x)\in \R \times \R^n.
\end{align*} 
Here,  
$W_1(x)$ satisfies some suitable condition,  
$W_2(x)= Q|x|^{-\gamma}$ 
($Q>0$ and 
$\gamma\in [2,4]$ with $\gamma<n$).
It was proved that $W_1$, $Q$ and $\gamma$ 
are uniquely determined
(see also 
\cite{Watanabe2007-2,Sasaki2007-2}).
Sasaki \cite{Sasaki2008} considered 
the three-dimensional Hartree equation with a potential  
\begin{align*}
i\Pt u + \Delta u 
=
W_3 u
+
(W_4 \ast |u|^2)u,
\quad
(t,x)\in \R \times \R^3,
\end{align*} 
where 
$W_j = Q_j \exp(-\gamma_j|x|)/|x|$ 
($Q_j\in \R$ and $\gamma_j>0$, $j=3,4$).
It was shown that if $|Q_3|<\gamma_3$, 
then
unknown parameters $Q_j$ and $\gamma_j$ ($j=3,4$)
are uniquely determined. 
In the above inverse scattering problems
for the Hartree equation,
we assume that 
the interaction potential 
is a known function with unknown parameters.
In other words,
we already know 
what kind of shape 
the interaction potential has.

We next consider 
inverse scattering problems for the Hartree equation 
and 
the case where we do not know 
what kind of shape 
the interaction potential $V$ has.
Sasaki--Watanabe \cite{SasakiWatabnabe} 
proved that 
if $2\le n \le 6$, $V\in L^1(\R^n)$ and 
$\phi\in H^1(\R^n) \setminus \{ 0 \}$,
then 
the following formula holds:
\begin{align*}
\F V(0)
=
\frac{\displaystyle{
\lim_{\lambda\to 0}
\lambda^{n+2}
K_3\brac{\phi_{\lambda,0}}
}}{\displaystyle{
(2\pi)^{n/2}
\nr{U(t)\phi }_{L^4(\R;L^4)}^4
}},
\end{align*} 
where 
$\F V$ is the Fourier transform of $V$ 
and 
$K_3[\phi]$ is 
the small amplitude limit defined by  
(\ref{SAL}) with $p=3$.
Later, 
Sasaki \cite{Sasaki2007-1}
proved that 
if $n\ge 3$ and $V$ is radial and 
satisfies some decay condition,
then we have for any non-negative integer $m$,
\begin{align*}
\nu_m
:=
\left.
\dfrac{d^m}{d\rho^m}
\F V(\rho,0,\cdots,0)
\right|_{\rho=0}
=
\frac{\displaystyle{
\lim_{\lambda \to 0}
\dfrac{\partial^m}{\partial \lambda^m}
\braa{
\lambda^{n+2}
K_3\brac{\phi_{\lambda,0}}}
}}{\displaystyle{
\int_{\R}
\nr{
|\cdot|^{m/2}
\F \ab{
U(t)\phi
}^2
}_{L^2(\R^n)}^2
dt
}},
\quad
\phi\in \Sc(\R^n)\setminus \{ 0 \}.
\end{align*} 
Furthermore,
if we assume that $\F V$ is an entire function,
then we have 
\begin{align}\label{radial}
\F V(\xi)
=
\sum_{m=0}^\infty
\dfrac{\nu_m}{m!}
|\xi|^m,
\quad
\xi\in \R^n
\end{align}  
and we can hence reconstruct $V$.

In this paper, 
we consider inverse scattering problems for (\ref{eq:H}),
supposing some decay condition for $V$.
We show that 
for any multi-index $\alpha$,
we can uniquely determine the exact value of 
$\Pxi^\alpha \F V(0)$ 
and 
we can reconstruct $V$ 
even if we do not suppose 
any symmetric conditions for $V$.

\subsection{Notation}
To state our main results precisely,
we introduce some notation.
Let $\N$ be the set of all positive integers.
Put $\Nz = \N \cup \{ 0 \}$.
We denote the Schwartz class $\Sc(\R^3)$ by $\Sc$.  
The Fourier transform $\F$ on $L^1$ 
is defined by 
\begin{align*}
\F f(\xi) = 
(2\pi)^{-3/2}
\int_{\R^3} 
e^{-ix\cdot \xi} f(x)
dx,
\quad 
f\in L^1, 
\ 
\xi\in \R^3. 
\end{align*} 
We define $3\times 3$ matrices $I_m$ ($m=1,2,3$) by 
\begin{align*}
I_1 = \mtrx{1}{0}{0}{0}{1}{0}{0}{0}{1},
\quad
I_2 = \mtrx{0}{1}{0}{1}{0}{0}{0}{0}{1},
\quad
I_3 = \mtrx{0}{0}{1}{0}{1}{0}{1}{0}{0}.
\end{align*} 
For multi-index 
$\ep = (\ep_1,\ep_2,\ep_3)\in \{ 0,1 \}^3$,
we put  
\begin{align*}
m(\ep)
=
\left\{
  \begin{array}{ll}
1                      &\text{if $|\ep| =    0$,}\\
\min\{ m;\ \ep_m=1 \}  &\text{if $|\ep| \neq 0$ }\\
  \end{array}
\right.
\end{align*} 
and $I(\ep)=I_{m(\ep)}$.
For $N\in \Nz$ and $\ep\in \{ 0,1 \}^3$,
we define 
$3\times 3$ matrices 
$D^{\lambda,\mu}_{N,\ep}$
($\lambda,\mu>0$),
non-negative integers 
$P_{N,\ep}(\alpha)$ 
($\alpha =(\alpha_1,\alpha_2,\alpha_3) \in \Nz^3$),
linear operators 
$U^\mu_{N,\ep}(t)$ and 
$U_\ep(t)$
($\mu>0$, $t\in \R$)
by 
\begin{align*} 
D^{\lambda,\mu}_{N,\ep}
&=
\lambda I(\ep)
\mtrx{1}{0}{0}{0}{\mu}{0}{0}{0}{\mu^{N+1}}
I(\ep),
\quad
P_{N,\ep}(\alpha)
=
\vcc{0}{1}{N+1}
I(\ep) \vc{\alpha_1}{\alpha_2}{\alpha_3} ,\\
U^\mu_{N,\ep}(t)
&=
\F^{-1}
\exp\braa{-it\ab{ D^{1,\mu}_{N,\ep}\xi }^2}
\F ,
\quad
U_\ep(t)
=
\F^{-1}
\exp\braa{-it \xi_{m(\ep)}^2}
\F ,
\end{align*} 
respectively.
We now enumerate two examples. 
If $\ep =(1,0,1)$, then 
\begin{align*}
I(\ep)
&=I_1,
\quad
D^{\lambda,\mu}_{N,\ep}
=
\diag \braa{\lambda,\lambda \mu ,\lambda \mu^{N+1}},
\quad
P_{N,\ep}(\alpha)
=
\alpha_2 + (N+1)\alpha_3,
\\
U^\mu_{N,\ep}(t)
&=
\exp
\braa{
it\brab{
\partial_1^2 + 
\mu^2 \partial_2^2 + 
\mu^{2N+2}\partial_3^2  
}},
\quad
U_\ep(t)
=
\exp\braa{it\partial_1^2}.
\end{align*} 
If $\ep =(0,0,1)$, then 
\begin{align*}
I(\ep)
&=I_3,
\quad
D^{\lambda,\mu}_{N,\ep}
=
\diag \braa{\lambda \mu^{N+1},\lambda \mu ,\lambda},
\quad
P_{N,\ep}(\alpha)
=
\alpha_2 + (N+1)\alpha_1,
\\
U^\mu_{N,\ep}(t)
&=
\exp
\braa{
it\brab{
\mu^{2N+2}\partial_1^2  + 
\mu^2 \partial_2^2 + 
\partial_3^2  
}},
\quad
U_\ep(t)
=
\exp\braa{it\partial_3^2}.
\end{align*} 
For $N\in \Nz$, 
we put 
$
N^\ast = 
\# \left\{ 
\alpha\in \Nz^3;\ 
|\alpha| = N
\right\}
=
(N+1)(N+2)/2
$.
For any $N\in \Nz$ and $\ep \in \left\{ 0,1 \right\}^3$,
there uniquely exists 
a sequence of multi-indices 
$
\brab{ \alpha(k) }_{k=1}^{N^\ast}
\subset
\Nz^3
$
satisfying the two following properties:
\begin{itemize}
  \item 
It follows that 
$|\alpha(k)|=N$
for any $k=1,\cdots,N^\ast$.
  \item 
If $1\le k < l \le N^\ast$,
then 
$P_{N,\ep}(\alpha(k))<P_{N,\ep}(\alpha(l))$.
\end{itemize}
Let us denote such a sequence by 
$
\brab{ \alpha(N,\ep;k) }_{k=1}^{N^\ast}
$.
For a $3 \times 3$ matrix $D$ and 
for a function $\varphi:\R^3 \to \C$,
we put $\varphi\circ D(x)=\varphi (Dx)$ 
($x\in \R^3$).
For $L\in \Nz$, $\lambda>0$ and 
for a function $h:(0,\infty) \to \C$,
we define 
$\vD_\lambda^L h(\lambda)$ 
by
\begin{align*}
\vD_\lambda^L h(\lambda)
=
\sum_{l=0}^L
\frac{(-1)^{L-l}L!}{l!(L-l)!} 
h( (l+1)\lambda ) .
\end{align*}  
For 
$\phi,\widetilde\phi \in \Sc$,
$\mu,\lambda >0$,
$N\in \Nz$ 
and 
$\ep \in \{ 0,1 \}^3$,
we set 
\begin{align*}
&
J^\lambda_{N,\ep}
\brac{ \phi,\widetilde\phi,\mu }
= \\
&\quad
\dfrac{i\mu^{N+2}\lambda^{-2N-|\ep|}}{
(2\pi)^{3/2}}
\vD_\lambda^{2N+|\ep|}
\brab{
\lambda^{-3N-7}
\bra{
(S-\id)
\left(
\lambda^{N+4}\phi\circ D^{\lambda,\mu}_{N,\ep}
\right),
\widetilde\phi \circ D^{\lambda,\mu}_{N,\ep}
}}  ,
\\
&
\Phi_{N,\ep}
\braa{ \phi,\widetilde\phi,\mu;t,\xi}
=
\F \ab{ U^\mu_{N,\ep}(t)\phi }^2 (\xi)
\times
\overline{
\F\braa{
\overline{U^\mu_{N,\ep}(t)\phi}
\times
U^\mu_{N,\ep}(t) \widetilde\phi 
}
(\xi)
}         ,\\
&
\Phi_\ep
\braa{ \phi,\widetilde\phi;t,\xi }
=
\F \ab{ U_\ep(t)\phi  }^2 (\xi)
\times
\overline{
\F\braa{
\overline{U_\ep(t)\phi}
\times
U_\ep(t) \widetilde\phi 
}
(\xi)
} .
\end{align*} 
Remark that 
if we fix $N$, $\ep$ and $\mu$, 
then
$\lambda^{N+4}\phi\circ D^{\lambda,\mu}_{N,\ep}$
belongs to the domain of the scattering operator $S$ 
for sufficiently small $\lambda>0$.
Furthermore,
we define 
$N^\ast$-th column vectors 
$\mf{J}^\lambda_{N,\ep}[ \phi,\widetilde\phi,\mu ]$,
$\mf{a}_{N,\ep}$
and 
the $N^\ast \times N^\ast$ matrix 
$\mf{M}_{N,\ep}[ \phi,\widetilde\phi,\mu ]$ by
\begin{align*} 
\mf{J}^\lambda_{N,\ep} 
\brac{ \phi,\widetilde\phi,\mu }
&=
\tr{
\braa{
J^\lambda_{N,\ep}
\brac{ \phi,\widetilde\phi,\mu^{(N+1)^{2(j-1)}}}
}_{1\le j \le N^\ast},          \\
\mf{a}_{N,\ep}}
&=
\tr{
\braa{
\partial_\xi^{2\alpha(N,\ep;k)+\ep}\F V(0)
}_{1\le k \le N^\ast}},          \\
\mf{M}_{N,\ep}
\brac{\phi,\widetilde\phi,\mu }
&=
\Bigg(
\dfrac{(2N+|\ep|)!}{(2\alpha(N,\ep;k)+\ep)!}
\mu^{ 
P_{N,\ep}\left( 2\alpha(N,\ep;k)+\ep \right)
\times (N+1)^{2(j-1)} 
}  
\\
&\qquad \times
\int_{\R^{1+3}}
\xi^{2\alpha(N,\ep;k)+\ep}
\Phi_{N,\ep}
\braa{ 
\phi,\widetilde\phi,\mu^{(N+1)^{2(j-1)}};t,\xi
}
d(t,\xi)
\Bigg)_{1\le j,k \le N^\ast} .
\end{align*} 
We remark that 
$\mf{J}^\mu_{N,\ep}$ and $\mf{M}_{N,\ep}$ are given 
and that 
$\mf{a}_{N,\ep}$ is unknown.
Let 
$E_1= \diag(-1,1,1)$,
$E_2= \diag(1,-1,1)$
and
$E_3= \diag(1,1,-1)$.
For $N\in \Nz$ and $\ep\in \{ 0,1 \}^3$, 
we set the three following conditions
for $\phi,\widetilde\phi\in \Sc$:
\begin{enumerate}[(C1)]
  \item 
For any $x \in \R^3$,
\begin{align*}
\phi \circ E_j (x) = \phi(x),
\quad 
j=1,2,3.
\end{align*} 
  \item 
For any $x\in \R^3$,
\begin{align*}
\widetilde\phi \circ E_j (x)
=
(-1)^{\ep_j} \widetilde\phi(x),
\quad 
j=1,2,3.
\end{align*} 
  \item 
For any $k\in \{ 1,\cdots,N^\ast \}$,
\begin{align*}
\int_{\R^{1+3}}
\xi^{2\alpha(N,\ep;k)+\ep}
\Phi_\ep\braa{\phi,\widetilde\phi;t,\xi}
d(t,\xi)
\neq 0.
\end{align*} 
\end{enumerate}

\subsection{Main Results}
We are ready to state our main results.
\begin{theorem}\label{thm:main-1}
Assume that 
the interaction potential $V$ 
of the equation (\ref{eq:H})
satisfies the following condition:
\begin{enumerate}[(V1)]
  \item 
There exists some positive number $A$ such that 
$e^{A|x|}V(x) \in L^1(\R_x^3)$.
\end{enumerate}
Let $N\in \Nz$ and $\ep\in \{ 0,1 \}^3$.
Fix 
$\phi,\widetilde\phi\in \Sc \setminus\{ 0 \}$. 
We assume that 
$\phi = \widetilde\phi$ and 
(C1) holds if $|\ep|=0$,  
and assume that 
(C1)--(C3) hold if $|\ep|\neq 0$. 

Then there exists some positive number $\widetilde\mu$ 
such that 
the matrix 
$
\mf{M}_{N,\ep}[ \phi,\widetilde\phi,\mu]
$
is invertible
for any $\mu\in (0,\widetilde\mu)$.
Furthermore,
for any $\mu\in (0,\widetilde\mu)$,
there exist some positive numbers $\lambda_\mu$ 
and 
$C_\mu$ 
depending only on 
$N,A,\| V \|_1,\mu,\phi$ and $\widetilde\phi$
such that 
\begin{align}\label{ineq:main}
\ab{
\mf{a}_{N,\ep}
-
\mf{M}_{N,\ep}
\brac{ \phi,\widetilde\phi,\mu }^{-1}
\mf{J}^\lambda_{N,\ep} 
\brac{ \phi,\widetilde\phi,\mu }
}
\le 
C_\mu\lambda,
\quad
\lambda\in (0,\lambda_\mu).
\end{align} 
In particular,
the unknown vector $\mf{a}_{N,\ep}$
is determined by 
\begin{align}\label{reconstruction}
\mf{a}_{N,\ep}
=
\mf{M}_{N,\ep}
\brac{ \phi,\widetilde\phi,\mu }^{-1}
\lim_{\lambda\to 0}
\mf{J}^\lambda_{N,\ep} 
\brac{ \phi,\widetilde\phi,\mu },
\quad
\mu\in (0,\widetilde\mu).
\end{align} 
\end{theorem}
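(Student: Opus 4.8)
The plan is to feed the rescaled, finite‑differenced scattering data into the representation (\ref{eqiv:S op})--(\ref{ineq:u - U phi}), collapse all the resulting powers of $\lambda$ and $\mu$, read off a linear system $\mf{J}^\lambda_{N,\ep}=\mf{M}_{N,\ep}\mf{a}_{N,\ep}+O(\lambda)$, and then invert $\mf{M}_{N,\ep}$. From (\ref{eqiv:S op}) and (\ref{ineq:u - U phi}) one gets, for $g,h\in\Sc$ with $\nr{g}_{H^1}$ small,
\begin{align*}
\bra{(S-\id)(g),h}
=-i\int_\R\bra{(V\ast|U(t)g|^2)U(t)g,\,U(t)h}\,dt+R(g,h),
\qquad
\ab{R(g,h)}\le C\nr{g}_{H^1}^5\nr{h}_{H^1},
\end{align*}
with $C$ controlled by $\nr{V}_1$ (through $\delta_0,C_0$ and Strichartz), the $t$-integral converging absolutely. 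Plancherel and $\widehat{V\ast\psi}=(2\pi)^{3/2}\widehat V\widehat\psi$ rewrite the $t$-integrand as $(2\pi)^{3/2}\int_{\R^3}\F V(\xi)\F|U(t)g|^2(\xi)\overline{\F(\overline{U(t)g}\,U(t)h)(\xi)}\,d\xi$. Taking $g=\lambda^{N+4}\phi\circ D^{\lambda,\mu}_{N,\ep}$, $h=\widetilde\phi\circ D^{\lambda,\mu}_{N,\ep}$, using the scaling identity $U(t)(\varphi\circ D^{\lambda,\mu}_{N,\ep})=(U^\mu_{N,\ep}(\lambda^2t)\varphi)\circ D^{\lambda,\mu}_{N,\ep}$ (valid since $|D^{\lambda,\mu}_{N,\ep}\xi|^2=\lambda^2|D^{1,\mu}_{N,\ep}\xi|^2$), changing variables $x\mapsto D^{\lambda,\mu}_{N,\ep}x$ in the convolution and $s=\lambda^2t$ in time, and inserting $\det D^{\lambda,\mu}_{N,\ep}=\lambda^3\mu^{N+2}$, all powers combine into
\begin{align*}
\lambda^{-3N-7}\bra{(S-\id)(\lambda^{N+4}\phi\circ D^{\lambda,\mu}_{N,\ep}),\widetilde\phi\circ D^{\lambda,\mu}_{N,\ep}}
=\frac{-i(2\pi)^{3/2}}{\mu^{N+2}}\,G_{N,\ep}(\lambda,\mu)+O_\mu(\lambda^{2N+4}),
\end{align*}
where $G_{N,\ep}(\lambda,\mu)=\int_{\R^{1+3}}\F V(D^{\lambda,\mu}_{N,\ep}\xi)\,\Phi_{N,\ep}(\phi,\widetilde\phi,\mu;t,\xi)\,d(t,\xi)$ and the error comes from $R$.

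Next, (V1) makes $\F V$ smooth near the origin with $\ab{\Pxi^\beta\F V(0)}\le(2\pi)^{-3/2}\int|x|^{|\beta|}e^{-A|x|}e^{A|x|}\ab{V(x)}\,dx$, so Taylor's theorem gives $\F V(y)=\sum_{|\beta|\le 2N+|\ep|+1}\frac{\Pxi^\beta\F V(0)}{\beta!}y^\beta+O(|y|^{2N+|\ep|+2})$ near $0$. Substituting $y=D^{\lambda,\mu}_{N,\ep}\xi$ into $G_{N,\ep}$, splitting the $\xi$-integral at $|\xi|\sim\lambda^{-1}$ (the far part is $O(\lambda^\infty)$ because $\Phi_{N,\ep}$ decays rapidly in $\xi$ uniformly in $t$, $U^\mu_{N,\ep}(t)$ being unitary and commuting with $\Px$), and using $(D^{\lambda,\mu}_{N,\ep}\xi)^\beta=\lambda^{|\beta|}\mu^{P_{N,\ep}(\beta)}\xi^\beta$, gives $G_{N,\ep}(\lambda,\mu)=\sum_{|\beta|\le 2N+|\ep|+1}\frac{\Pxi^\beta\F V(0)}{\beta!}\lambda^{|\beta|}\mu^{P_{N,\ep}(\beta)}\int_{\R^{1+3}}\xi^\beta\Phi_{N,\ep}\,d(t,\xi)+O(\lambda^{2N+|\ep|+2})$. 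By (C1) and (C2), $\Phi_{N,\ep}(\phi,\widetilde\phi,\mu;t,\cdot)$ has parity $(-1)^{\ep_j}$ in each $\xi_j$, so the $\xi$-integral vanishes unless $\beta=2\gamma+\ep$ with $\gamma\in\Nz^3$; among these, the ones with $|\beta|\le 2N+|\ep|+1$ are precisely those with $|\gamma|\le N$. Applying $\vD_\lambda^{2N+|\ep|}$ — which annihilates $\lambda^p$ for $p<2N+|\ep|$, sends $\lambda^{2N+|\ep|}\mapsto(2N+|\ep|)!\,\lambda^{2N+|\ep|}$, and leaves the remainder of the same order — then multiplying by $i\mu^{N+2}\lambda^{-2N-|\ep|}(2\pi)^{-3/2}$ and keeping only $|\gamma|=N$ yields
\begin{align*}
J^\lambda_{N,\ep}[\phi,\widetilde\phi,\mu]
=(2N+|\ep|)!\sum_{|\gamma|=N}\frac{\Pxi^{2\gamma+\ep}\F V(0)}{(2\gamma+\ep)!}\mu^{P_{N,\ep}(2\gamma+\ep)}\int_{\R^{1+3}}\xi^{2\gamma+\ep}\Phi_{N,\ep}(\phi,\widetilde\phi,\mu;t,\xi)\,d(t,\xi)+O_\mu(\lambda).
\end{align*}
Re-indexing by $\gamma=\alpha(N,\ep;k)$ and replacing $\mu$ by $\mu^{(N+1)^{2(j-1)}}$ in the $j$-th component, the main term is exactly the $j$-th component of $\mf{M}_{N,\ep}[\phi,\widetilde\phi,\mu]\mf{a}_{N,\ep}$, so $\mf{J}^\lambda_{N,\ep}[\phi,\widetilde\phi,\mu]=\mf{M}_{N,\ep}[\phi,\widetilde\phi,\mu]\mf{a}_{N,\ep}+O_\mu(\lambda)$.

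It remains to invert $\mf{M}_{N,\ep}[\phi,\widetilde\phi,\mu]$ for small $\mu$. Its $(j,k)$-entry equals $c_k\!\left(\mu^{(N+1)^{2(j-1)}}\right)\left(\mu^{(N+1)^{2(j-1)}}\right)^{p_k}$, where $p_k=P_{N,\ep}(2\alpha(N,\ep;k)+\ep)$ and $c_k(\nu)=\frac{(2N+|\ep|)!}{(2\alpha(N,\ep;k)+\ep)!}\int_{\R^{1+3}}\xi^{2\alpha(N,\ep;k)+\ep}\Phi_{N,\ep}(\phi,\widetilde\phi,\nu;t,\xi)\,d(t,\xi)$. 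Since the ordering defining $\alpha(N,\ep;\cdot)$ is by strictly increasing $P_{N,\ep}$ we have $p_1<\dots<p_{N^\ast}$, and the weights $(N+1)^{2(j-1)}$ are strictly increasing in $j$; by the strict rearrangement inequality the reversing permutation $\sigma_0(j)=N^\ast+1-j$ is the unique minimizer of $\sum_j(N+1)^{2(j-1)}p_{\sigma(j)}$, so in the Leibniz expansion of $\det\mf{M}_{N,\ep}$ the $\sigma_0$-term carries the strictly lowest power $\mu^{E_0}$. Letting $\mu\to0$ and using $U^\mu_{N,\ep}(t)\to U_\ep(t)$, hence $c_k(\nu)\to c_k(0):=\frac{(2N+|\ep|)!}{(2\alpha(N,\ep;k)+\ep)!}\int_{\R^{1+3}}\xi^{2\alpha(N,\ep;k)+\ep}\Phi_\ep(\phi,\widetilde\phi;t,\xi)\,d(t,\xi)$, one gets $\mu^{-E_0}\det\mf{M}_{N,\ep}[\phi,\widetilde\phi,\mu]\to\mathrm{sgn}(\sigma_0)\prod_k c_k(0)$. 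When $|\ep|=0$ (so $\phi=\widetilde\phi$), $\Phi_\ep(\phi,\phi;t,\xi)=\ab{\F|U_\ep(t)\phi|^2(\xi)}^2\ge0$, whence $c_k(0)>0$ for $\phi\neq0$; when $|\ep|\neq0$, (C3) gives $c_k(0)\neq0$. Hence $\det\mf{M}_{N,\ep}[\phi,\widetilde\phi,\mu]\neq0$ on some interval $(0,\widetilde\mu)$, and there $\nr{\mf{M}_{N,\ep}^{-1}}$ depends only on $N,\ep,\mu,\phi,\widetilde\phi$ (note that $\mf{M}_{N,\ep}$ does not involve $V$). Multiplying the identity of the preceding paragraph by $\mf{M}_{N,\ep}^{-1}$ gives (\ref{ineq:main}) with $C_\mu$ depending only on $N,A,\nr{V}_1,\mu,\phi,\widetilde\phi$, and $\lambda\to0$ gives (\ref{reconstruction}).

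The step I expect to be the main obstacle is the passage $\mu\to0$ inside the phase-space integrals defining $c_k$: one must produce an $L^1(\R^{1+3})$ function dominating $\ab{\xi^{2\alpha+\ep}\Phi_{N,\ep}(\phi,\widetilde\phi,\mu;t,\xi)}$ uniformly for $\mu$ near $0$, although the dispersive decay of $U^\mu_{N,\ep}(t)$ in the two contracted directions degenerates as $\mu\to0$. The point is that the lost dispersion is compensated by the Schwartz decay of $\phi,\widetilde\phi$ in the corresponding spatial variables, which $U^\mu_{N,\ep}(t)$ preserves uniformly in $t$, while in the one non-degenerate direction one retains the Strichartz-type $t$-integrability underlying Theorem \ref{thm:direct}; combining these yields the uniform bound, hence $c_k(\mu)\to c_k(0)$ by dominated convergence. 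A milder relative of this point — the termwise $\lambda$-expansion of $\F V(D^{\lambda,\mu}_{N,\ep}\xi)$ under the integral above — is handled by the same uniform rapid decay of $\Phi_{N,\ep}$ in $\xi$ together with the Taylor remainder bound coming from (V1).
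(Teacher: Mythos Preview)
Your derivation of $\mf{J}^\lambda_{N,\ep}=\mf{M}_{N,\ep}\mf{a}_{N,\ep}+O_\mu(\lambda)$ is essentially the paper's argument, only organised differently (you Taylor-expand $\F V$ first and then apply $\vD_\lambda^{2N+|\ep|}$, whereas the paper passes from $\vD_\lambda^{2N+|\ep|}$ to the $(2N+|\ep|)$-th derivative via Proposition~\ref{prop:vD} and then lets $\lambda\to 0$ inside the derivative). The parity reduction and the rearrangement-inequality identification of the dominant permutation $\sigma_0$ are also correct and match the paper's Steps~I--II.

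The gap is in the invertibility argument when $|\ep|=0$. You claim $c_k(\nu)\to c_k(0)$ by dominated convergence, but for those $k$ with $\alpha(N,0;k)_1=0$ (so no $\xi_1$-factor in the integrand) the limit integral is actually $+\infty$: since $U_\ep(t)=e^{it\partial_1^2}$, the quantity $\F|U_\ep(t)\phi|^2(0,\xi_2,\xi_3)$ is independent of $t$, so $\Phi_\ep(\phi,\phi;t,\cdot)$ has no time decay on $\{\xi_1=0\}$ and $\int_{\R^{1+3}}\xi_2^{2m}\xi_3^{2l}\Phi_\ep\,d(t,\xi)$ diverges. Correspondingly there is no $\mu$-uniform dominating function; the bound in Proposition~\ref{prop:decay} is $\frac{C_L\langle\xi\rangle^{-2L}}{1+t^2(\xi_1^2+\mu^4\xi_2^2)}$, whose $t$-integral blows up like $|\log\mu|$ after integrating in $\xi$. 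Your last paragraph anticipates the difficulty but resolves it in the wrong direction: the ``Strichartz-type $t$-integrability'' you invoke is exactly what is lost when $\xi_1=0$, and the Schwartz decay in the contracted variables does not restore it.

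The paper's Lemma~\ref{lem:M} avoids this by \emph{not} claiming convergence. For $|\ep|=0$ it uses Proposition~\ref{prop:log}: a Fatou lower bound gives $c_k(\mu)\ge C_1>0$ uniformly, while an explicit computation gives $c_k(\mu)\le C_2(1+|\log\mu|)$. Since the exponents $\sum_j(N+1)^{2(j-1)}p_{\sigma(j)}$ are integers and $\sigma_0$ is the strict minimiser, the gap between $E_0$ and any other $E_\sigma$ is at least $1$; hence
\[
|\det\mf{M}_{N,\ep}|\ge\Bigl(C_1^{N^\ast}-N^\ast!\,C_2^{N^\ast}(1+|\log\mu|)^{N^\ast}\mu\Bigr)\mu^{E_0}>0
\]
for small $\mu$. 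Your argument is fine for $|\ep|\neq 0$ (then $\ep_{m(\ep)}=1$ forces $\beta_{m(\ep)}\ge 1$ and Proposition~\ref{prop:limit} does give convergence to a finite nonzero limit by (C3)); for $|\ep|=0$ you need the Fatou/log-bound mechanism above rather than dominated convergence.
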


We next mention 
some uniqueness and stability for identifying 
$\partial_\xi^\alpha \F V(0)$.
\begin{cor}\label{cor:main-2}
Let $j=1,2$. 
Suppose that $V_j$ satisfies 
(V1) with $V=V_j$ 
and that 
$V_1,V_2 \in B(M,L^1)$ 
for some $M>0$.
Let 
$S_j: B(\delta_0(M),H^1)\to B(C_0^2 \delta_0(M),H^1)$ 
be the scattering operator 
for the equation (\ref{eq:H}) with $V=V_j$.
Here, 
$C_0$ and 
$\delta_0(M)$ are positive numbers 
mentioned in Theorem \ref{thm:direct}.
Then the following properties hold:
\begin{enumerate}[(1)]
  \item 
Define $\nr{S_1 - S_2}$ by
\begin{align*}
\nr{S_1 - S_2}
=
\sup\brab{
\dfrac{\nr{(S_1 - S_2)\phi}_{H^1}
}{\nr{\phi}_{H^1}^3};\
\phi \in B(\delta_0(M);H^1)\setminus \{ 0\}
}.
\end{align*} 
Then   
for any $\alpha\in \Nz^3$,
we have 
\begin{align}\label{ineq:stability}
\ab{
\Pxi^\alpha \F V_1(0)
-
\Pxi^\alpha \F V_2(0)
}
\le
C
\braa{
\nr{S_1 - S_2}^{\frac{1}{|\alpha|+2}}
+
\nr{S_1 - S_2}
},
\end{align} 
where the constant 
$C$ is dependent on $|\alpha|,A$ and $M$. 
  \item 
If $S_1=S_2$, then $V_1=V_2$.
\end{enumerate}

\end{cor}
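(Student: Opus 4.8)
The plan is to derive Corollary~\ref{cor:main-2} from the effective reconstruction bound~(\ref{ineq:main}) of Theorem~\ref{thm:main-1}. The decisive observation is that the matrix $\mf{M}_{N,\ep}[\phi,\widetilde\phi,\mu]$ is assembled only from the free propagators $U^\mu_{N,\ep}(t)$ and from combinatorial data, hence does not depend on the interaction potential; the potential enters only through the vectors $\mf{J}^\lambda_{N,\ep}[\phi,\widetilde\phi,\mu]$, and there solely via the scattering operator. To prove~(1), fix $\alpha\in\Nz^3$ and write $\alpha=2\beta+\ep$ with $\ep\in\{0,1\}^3$ and $\beta\in\Nz^3$ ($\ep_m$ being the parity of $\alpha_m$); set $N=|\beta|$ and let $k\in\{1,\dots,N^\ast\}$ be the index with $\alpha(N,\ep;k)=\beta$, so that $\Pxi^\alpha\F V_j(0)$ is the $k$-th component of the vector $\mf{a}_{N,\ep}$ attached to $V_j$, which I write $\mf{a}_{N,\ep}^{(j)}$. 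Choose $\phi,\widetilde\phi\in\Sc\setminus\{0\}$ fulfilling the hypotheses of Theorem~\ref{thm:main-1} for this $(N,\ep)$; since the conditions (C1)--(C3) involve only the free evolution $U_\ep(t)$, this choice is common to $V_1$ and $V_2$. Putting $A=\min\{A_1,A_2\}$, where $A_j>0$ is a decay rate for $V_j$ as in (V1), both $V_j$ satisfy (V1) with this $A$ and $\|V_j\|_1\le M$; and since the numbers $\widetilde\mu,\lambda_\mu,C_\mu$ of Theorem~\ref{thm:main-1} depend only on $N,A,\|V\|_1,\mu,\phi,\widetilde\phi$, they can be taken the same for $V_1$ and $V_2$ (we may also assume $\widetilde\mu\le1$). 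Fixing one $\mu\in(0,\widetilde\mu)$ and applying~(\ref{ineq:main}) to each $V_j$, the triangle inequality yields, for $\lambda\in(0,\lambda_\mu)$,
\[
\ab{\mf{a}_{N,\ep}^{(1)}-\mf{a}_{N,\ep}^{(2)}}
\le 2C_\mu\lambda
+\nr{\mf{M}_{N,\ep}[\phi,\widetilde\phi,\mu]^{-1}}\,
\ab{\mf{J}_{N,\ep}^{\lambda,(1)}-\mf{J}_{N,\ep}^{\lambda,(2)}},
\]
where $\mf{J}_{N,\ep}^{\lambda,(j)}$ denotes $\mf{J}^\lambda_{N,\ep}[\phi,\widetilde\phi,\mu]$ formed with $S_j$.

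The heart of the matter is to bound $\ab{\mf{J}_{N,\ep}^{\lambda,(1)}-\mf{J}_{N,\ep}^{\lambda,(2)}}$ in terms of $\nr{S_1-S_2}$. Since these two vectors differ only by replacing $S_1$ with $S_2$, every component of their difference is a fixed linear combination (with binomial coefficients, coming from $\vD_\lambda^{2N+|\ep|}$) of terms $(\lambda')^{-3N-7}\bra{(S_1-S_2)\braa{(\lambda')^{N+4}\phi\circ D^{\lambda',\mu'}_{N,\ep}},\widetilde\phi\circ D^{\lambda',\mu'}_{N,\ep}}$, where $\lambda'$ runs over $\{\lambda,2\lambda,\dots,(2N+|\ep|+1)\lambda\}$ and $\mu'$ over the finitely many numbers $\mu^{(N+1)^{2(j-1)}}\in(0,1)$. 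Each such term I would estimate by means of $\ab{\bra{(S_1-S_2)\psi,\chi}}\le\nr{S_1-S_2}\,\nr{\psi}_{H^1}^3\nr{\chi}_{H^1}$ for $\psi\in B(\delta_0(M),H^1)$, together with the dilation identity $\nr{\varphi\circ D^{\lambda',\mu'}_{N,\ep}}_{L^2}=(\lambda')^{-3/2}(\mu')^{-(N+2)/2}\nr{\varphi}_{L^2}$ and its first-order analogue (the entries of $D^{\lambda',\mu'}_{N,\ep}$ being $\le\lambda'$ when $\mu'<1$): these give $\nr{(\lambda')^{N+4}\phi\circ D^{\lambda',\mu'}_{N,\ep}}_{H^1}\le C(\lambda')^{N+5/2}$, which lies in $B(\delta_0(M),H^1)$ for small $\lambda$, and $\nr{\widetilde\phi\circ D^{\lambda',\mu'}_{N,\ep}}_{H^1}\le C(\lambda')^{-3/2}$, whence each term is $\le C\nr{S_1-S_2}(\lambda')^{-1}$. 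Summing the at most $2N+|\ep|+1$ terms and multiplying by the prefactor, which is of size $\lambda^{-2N-|\ep|}$ up to a $\mu$-dependent constant, gives $\ab{\mf{J}_{N,\ep}^{\lambda,(1)}-\mf{J}_{N,\ep}^{\lambda,(2)}}\le K\,\nr{S_1-S_2}\,\lambda^{-(|\alpha|+1)}$ for $\lambda\in(0,\lambda_\mu]$, because $2N+|\ep|=|\alpha|$. Substituting this above,
\[
\ab{\mf{a}_{N,\ep}^{(1)}-\mf{a}_{N,\ep}^{(2)}}\le a\lambda+b\,\lambda^{-(|\alpha|+1)},
\qquad\lambda\in(0,\lambda_\mu],
\]
with $a=2C_\mu$ and $b$ a constant multiple of $\nr{S_1-S_2}$. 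If the minimizer of the right-hand side over $\lambda>0$ lies in $(0,\lambda_\mu]$, I choose $\lambda$ to be this minimizer and obtain a bound of order $\nr{S_1-S_2}^{1/(|\alpha|+2)}$; otherwise $\nr{S_1-S_2}$ exceeds a fixed positive constant and taking $\lambda=\lambda_\mu$ gives a bound of order $\nr{S_1-S_2}$. In either case the $k$-th component gives~(\ref{ineq:stability}), the constant there being the maximum of the resulting constants over the finitely many pairs $(N,\ep)$ with $2N+|\ep|=|\alpha|$, hence depending only on $|\alpha|,A,M$.

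Part~(2) then follows at once: if $S_1=S_2$ then $\nr{S_1-S_2}=0$, so~(\ref{ineq:stability}) forces $\Pxi^\alpha\F V_1(0)=\Pxi^\alpha\F V_2(0)$ for every $\alpha\in\Nz^3$. Under (V1) the integral $\int_{\R^3}e^{-ix\cdot\zeta}V_j(x)\,dx$ converges absolutely for every $\zeta\in\C^3$ with $\ab{\Imp\zeta}<A$, so $\F V_j$ extends to a function holomorphic on the tube $\{\zeta\in\C^3:\ab{\Imp\zeta}<A\}$; hence $\F V_1-\F V_2$ is holomorphic there and vanishes to infinite order at the origin, and therefore vanishes identically on this connected open set, in particular on $\R^3$. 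Since the Fourier transform is injective on $L^1$, this gives $V_1=V_2$.

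The step I expect to be the main obstacle is the core estimate of the second paragraph: one must carry the exact powers of $\lambda$ through the dilation $D^{\lambda,\mu}_{N,\ep}$, the $H^1$-norm, the cubic term hidden inside $S-\id$, the weight $\lambda^{-3N-7}$, and the finite-difference operator $\vD_\lambda^{2N+|\ep|}$, and check that they combine to the net power $-(|\alpha|+1)$; it is exactly this cancellation that makes the elementary optimization in $\lambda$ produce the exponent $1/(|\alpha|+2)$ in~(\ref{ineq:stability}). A subsidiary point is to use Theorem~\ref{thm:main-1} in the uniform form in which its constants are phrased, so that one choice of $\widetilde\mu,\lambda_\mu,C_\mu,\phi,\widetilde\phi$ serves simultaneously for $V_1$ and $V_2$.
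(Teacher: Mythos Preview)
Your proposal is correct and follows essentially the same route as the paper: apply the bound~(\ref{ineq:main}) of Theorem~\ref{thm:main-1} to $V_1$ and $V_2$ with a common choice of $\phi,\widetilde\phi,\mu$, estimate the difference $\mf{J}^{\lambda,(1)}_{N,\ep}-\mf{J}^{\lambda,(2)}_{N,\ep}$ via the definition of $\|S_1-S_2\|$ and the dilation norm computations (the paper's power count $\lambda^{-2N-|\ep|}\lambda^{-3N-7}\lambda^{3N+12}\lambda^{-6}=\lambda^{-2N-|\ep|-1}$ matches yours), then optimize in $\lambda$ by taking $\lambda=\min\{\lambda_\mu,\|S_1-S_2\|^{1/(|\alpha|+2)}\}$. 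For part~(2) the paper invokes Proposition~\ref{prop:ana} (real-analyticity of $\F V_j$) rather than your holomorphic extension to a tube, but these are equivalent arguments.
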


We enumerate some remarks for 
Theorem   \ref{thm:main-1} and 
Corollary \ref{cor:main-2}.

\begin{rem}\label{rem:phi}
Let $N\in \Nz$ and 
let $\ep\in \{ 0,1 \}^3$ with $|\ep| \neq 0$.
We now introduce an example of 
$\phi,\widetilde\phi\in \Sc\setminus \{ 0 \}$ 
satisfying (C1)--(C3).
Fix 
$\varphi_j\in \Sc(\R)\setminus\{ 0 \}$
($j=1,2,3$)
which are even.
We put  
\begin{align*}
\phi(x_1,x_2,x_3)
=
\varphi_1(x_1) \varphi_2(x_2) \varphi_3(x_3),
\quad
\widetilde\phi(x_1,x_2,x_3)
=
\Px^\ep \phi(x_1,x_2,x_3).
\end{align*} 
Then 
we immediately see that 
$\phi$ and $\widetilde\phi$ 
satisfy (C1) and (C2),
respectively. 
It follows from 
Proposition \ref{prop:C3} below 
that (C3) holds.
\end{rem}

\begin{rem}\label{rem:main}
We immediately see that 
for any $\beta \in \Nz^3$,
there uniquely exist 
$N\in \Nz$, $\ep\in \{ 0,1 \}^3$ and 
$k\in \{ 1,\cdots,N^\ast\}$ 
such that 
$\beta = 2\alpha(N,\ep;k)+\ep$.
Therefore,
using our main results, 
we can uniquely determine the exact value of 
$\Pxi^\beta \F V(0)$
for any $\beta \in \Nz^3$.
Furthermore,
it follows from Proposition \ref{prop:ana} that
\begin{align*}
\F V(\xi)
=
\sum_{|\beta|\ge 0}
\dfrac{\Pxi^\beta \F V(0)}{\beta !}
\xi^\beta,
\quad
|\xi| < A/3.
\end{align*} 
Using Proposition \ref{prop:ana} again and again,
we see the exact value of 
$\F V(\xi)$ ($\xi \in \R^3$)
and we can hence reconstruct $V$.
\end{rem}

Introducing        
the contents of the rest of this paper,
we close this section.
In Section \ref{sec:pre},
we show that 
the matrix 
$\mf{M}_{N,\ep}[\phi,\widetilde\phi,\mu]$
is invertible 
for some 
$\phi,\widetilde\phi\in \Sc \setminus \{ 0 \}$ 
and $\mu>0$.
In order to prove the invertibility,
we first show some propositions 
for functions 
$\Phi_{N,\ep}$ and $\Phi_\ep$.
Section \ref{sec:proof} is devoted to 
the proof of main results.
In Appendix \ref{sec:appendix},
we prove some supplementary propositions.

\section{Preliminaries}\label{sec:pre}
In this section,
we show propositions  
used in Section \ref{sec:proof} below.
In particular, 
we show the following lemma:
\begin{lem}\label{lem:M}
Let $N\in \Nz$ and $\ep\in \{ 0,1\}^3$.
Fix   
$\phi,\widetilde\phi \in \Sc\setminus \{ 0\}$.
Assume that 
$\phi=\widetilde\phi$
if $|\ep| = 0$
and that 
(C3) holds
if $|\ep| \neq 0$.
Then there exists some positive number $\widetilde\mu$ 
such that 
for any $\mu \in (0,\widetilde\mu)$,
the matrix 
$\mf{M}_{N,\ep}[\phi,\widetilde\phi,\mu]$ 
is invertible.
\end{lem}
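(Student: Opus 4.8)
The plan is to reduce the invertibility of $\mf{M}_{N,\ep}[\phi,\widetilde\phi,\mu]$ for small $\mu$ to a triangularity/asymptotics argument, exploiting the parameter $\mu$ as the small quantity that decouples the rows and columns. First I would examine the $(j,k)$ entry
\begin{align*}
\dfrac{(2N+|\ep|)!}{(2\alpha(N,\ep;k)+\ep)!}
\mu^{P_{N,\ep}(2\alpha(N,\ep;k)+\ep)\,(N+1)^{2(j-1)}}
\int_{\R^{1+3}}
\xi^{2\alpha(N,\ep;k)+\ep}\,
\Phi_{N,\ep}\!\braa{\phi,\widetilde\phi,\mu^{(N+1)^{2(j-1)}};t,\xi}\,
d(t,\xi)
\end{align*}
and track its behaviour as $\mu\to 0$. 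Here the key observation is that $U^\mu_{N,\ep}(t)$ is built from the symbol $\exp(-it|D^{1,\mu}_{N,\ep}\xi|^2)$, whose components are $\xi_{m(\ep)}^2$ plus $O(\mu^2)$-weighted terms; so as $\mu\to 0$ the operator $U^\mu_{N,\ep}(t)$ "converges" (after the appropriate rescaling in the non-principal variables) to the one-dimensional free evolution $U_\ep(t)$ acting in the $x_{m(\ep)}$ direction. Consequently $\Phi_{N,\ep}\braa{\phi,\widetilde\phi,\mu^{(N+1)^{2(j-1)}};t,\xi}$ should, after extracting the correct power of $\mu$, limit to $\Phi_\ep(\phi,\widetilde\phi;t,\xi)$ times a product of delta-type factors in the transverse frequencies — this is exactly why conditions (C1)--(C3) are imposed, and why the integral $\int \xi^{2\alpha(N,\ep;k)+\ep}\Phi_\ep(\phi,\widetilde\phi;t,\xi)\,d(t,\xi)$ from (C3) appears. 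I would make this rigorous by a change of variables in $\xi$ that absorbs the anisotropic scaling in $D^{1,\mu}_{N,\ep}$, followed by a dominated-convergence argument using the Schwartz decay of $\phi,\widetilde\phi$ to control the integrand uniformly in $\mu$.

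Next I would identify the dominant exponent of $\mu$ in each entry. Because the exponents $P_{N,\ep}(2\alpha(N,\ep;k)+\ep)$ are strictly increasing in $k$ by the very construction of the ordering $\{\alpha(N,\ep;k)\}_k$, and because the row index $j$ enters only through $(N+1)^{2(j-1)}$, the matrix $\mf{M}_{N,\ep}$ has the structure of a generalized Vandermonde matrix in the variable $\mu$: after factoring the leading power of $\mu$ out of each column, the entry in row $j$, column $k$ is asymptotically $c_k\,\mu^{p_k((N+1)^{2(j-1)}-1)} (1+o(1))$ for constants $c_k\neq 0$ (nonvanishing precisely by (C3), or by $\phi=\widetilde\phi$ with (C1) when $|\ep|=0$), where $p_k = P_{N,\ep}(2\alpha(N,\ep;k)+\ep)$ are distinct. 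The determinant of such a matrix is, to leading order in $\mu$, a nonzero multiple of a Vandermonde-type product $\prod_{k}c_k \cdot \prod_{j<j'}(\mu^{p\cdot\,\text{something}} - \cdots)$, hence nonzero for all sufficiently small $\mu>0$; I would spell this out by writing $\det \mf{M}_{N,\ep} = (\text{explicit nonzero monomial in }\mu)\cdot(1+o(1))$ as $\mu\to 0$.

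The main obstacle I anticipate is the $\mu\to 0$ limit computation of the oscillatory space-time integrals $\int_{\R^{1+3}}\xi^{2\alpha}\Phi_{N,\ep}(\phi,\widetilde\phi,\mu^{\cdots};t,\xi)\,d(t,\xi)$: one must commute the limit with the $t$-integration, and the integrand is a product of Fourier transforms of quadratic expressions in the dispersively spreading functions $U^\mu_{N,\ep}(t)\phi$ and $U^\mu_{N,\ep}(t)\widetilde\phi$, which only decay like $t^{-3/2}$ in $L^\infty$. Establishing a uniform-in-$\mu$ integrable majorant will require care — presumably one rescales so that the anisotropy is moved into the initial data, uses the $L^1$--$L^\infty$ dispersive estimate in the one principal direction together with unitarity in the transverse directions, and invokes the rapid decay of $\phi,\widetilde\phi$ to bound the transverse pieces. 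I would isolate this as a separate proposition (the statements about $\Phi_{N,\ep}$ and $\Phi_\ep$ alluded to in the section introduction) and then feed it into the Vandermonde determinant argument above to conclude the lemma.
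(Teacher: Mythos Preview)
Your overall strategy---extract a dominant monomial in $\mu$ from $\det \mf{M}_{N,\ep}$ via a Vandermonde/Leibniz-type argument, with the nonvanishing of the coefficients supplied by (C3) or by $\phi=\widetilde\phi$---is exactly the route the paper takes. The paper also carries out the two preliminary steps you identify: pointwise convergence $\Phi_{N,\ep}\to\Phi_\ep$ as $\mu\to 0$ (its Proposition~\ref{prop:conv}) and a uniform-in-$\mu$ integrable majorant for $\xi^\beta\Phi_{N,\ep}$ when $\beta_{m(\ep)}\neq 0$ (its Propositions~\ref{prop:decay}--\ref{prop:limit}), so your instinct about where the analytic work lies is correct. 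Two remarks: first, no ``delta-type factors in the transverse frequencies'' appear---the limit $U^\mu_{N,\ep}(t)\to U_\ep(t)$ simply switches off the transverse dispersion, and the convergence $\Phi_{N,\ep}\to\Phi_\ep$ is honest pointwise convergence in $(t,\xi)$. Second, the dominant permutation in the Leibniz expansion is the anti-diagonal $\sigma_0(j)=N^\ast+1-j$, which is consistent with your column-factoring picture but worth making explicit.

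There is, however, a genuine gap in the case $|\ep|=0$. You assert that each matrix entry is $c_k\,\mu^{p_k((N+1)^{2(j-1)}-1)}(1+o(1))$ with $c_k\neq 0$ finite, but for $\ep=(0,0,0)$ and any $k$ with $\alpha(N,\ep;k)_1=0$ the corresponding integral $\int_{\R^{1+3}}\xi^{2\alpha(N,\ep;k)}\Phi_{N,\ep}(\phi,\phi,\mu^{(N+1)^{2(j-1)}};t,\xi)\,d(t,\xi)$ does \emph{not} converge as $\mu\to 0$: the majorant from Proposition~\ref{prop:decay} is $(1+t^2(\xi_1^2+\mu^4\xi_2^2))^{-1}$, and when $\alpha_1=0$ the $t$-integral contributes a factor $|\xi_1|^{-1}$ that is not integrable near $\xi_1=0$, so dominated convergence fails and in fact the integral diverges like $\log(1/\mu)$. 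The paper faces this squarely in Proposition~\ref{prop:log}, proving the two-sided bound $C_1\le \int \xi_2^{2m}\xi_3^{2l}\Phi_{N,\ep}(\phi,\phi,\mu;t,\xi)\,d(t,\xi)\le C_2(1+|\log\mu|)$. With this in hand the determinant argument still goes through, because the strict exponent gap between the dominant permutation $\sigma_0$ and all others is at least $1$, and $(1+|\log\mu|)^{N^\ast}\mu\to 0$; but your ``$(1+o(1))$'' formulation would need to be replaced by this coarser two-sided estimate.
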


To show Lemma \ref{lem:M}, 
we first prove some properties for functions  
$\Phi_{N,\ep}(\phi,\widetilde\phi,\mu;t,\xi)$
and 
$\Phi_\ep(\phi,\widetilde\phi;t,\xi)$.

\begin{prop}\label{prop:decay}
Let $N\in \Nz$ and $\ep\in\{ 0,1 \}^3$.
For any $\mu>0$ and $L>3$, 
there exists some positive number $C_L$ such that 
\begin{align*}
\ab{\Phi_{N,\ep}
\braa{\phi,\widetilde\phi,\mu;t,\xi
}}
\le 
\left\{
  \begin{array}{cl}
\dfrac{
C_L \bra{\xi}^{-2L}
}{
1+ t^2
\braa{\xi_1^2 + \mu^4 \xi_2^2}
} 
&\text{if $\ep =    (0,0,0)$,}\\[6mm]
\dfrac{
C_L \bra{\xi}^{-2L}
}{
1+ t^2 \xi_{m(\ep)}^2 
}    
&\text{if $\ep \neq (0,0,0)$ }\\
  \end{array}
\right.
\end{align*} 
for any 
$(t,\xi)\in \R\times \R^3$.
\end{prop}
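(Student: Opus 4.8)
The plan is to write $\Phi_{N,\ep}$ as a product of two oscillatory integrals that are non-stationary in the integration variable, and to estimate each by repeated integration by parts. Put $D=D^{1,\mu}_{N,\ep}$, a diagonal $3\times 3$ matrix with positive entries, and set $w=U^\mu_{N,\ep}(t)\phi$, $\widetilde w=U^\mu_{N,\ep}(t)\widetilde\phi$ (with $\phi,\widetilde\phi\in\Sc$, as required for $\Phi_{N,\ep}$ to be defined). Since $\F w(\eta)=e^{-it\ab{D\eta}^2}\F\phi(\eta)$ and $\F\widetilde w(\eta)=e^{-it\ab{D\eta}^2}\F\widetilde\phi(\eta)$, the convolution theorem together with the elementary identities $\ab{D(\xi+\eta)}^2-\ab{D\eta}^2=\ab{D\xi}^2+2(D^2\xi)\cdot\eta$ and $\F\overline{g}(\eta)=\overline{\F g(-\eta)}$ yields, after the substitution $\eta\mapsto-\eta$,
\begin{align*}
\F\ab{w}^2(\xi)
&=(2\pi)^{-3/2}e^{-it\ab{D\xi}^2}\int_{\R^3}e^{-2it(D^2\xi)\cdot\eta}\,\F\phi(\xi+\eta)\,\overline{\F\phi(\eta)}\,d\eta=:e^{-it\ab{D\xi}^2}G_1(t,\xi),\\
\F\braa{\overline{w}\,\widetilde w}(\xi)
&=(2\pi)^{-3/2}e^{it\ab{D\xi}^2}\int_{\R^3}e^{-2it(D^2\xi)\cdot\eta}\,\overline{\F\phi(\eta-\xi)}\,\F\widetilde\phi(\eta)\,d\eta=:e^{it\ab{D\xi}^2}G_2(t,\xi),
\end{align*}
whence $\ab{\Phi_{N,\ep}\braa{\phi,\widetilde\phi,\mu;t,\xi}}=\ab{G_1(t,\xi)}\,\ab{G_2(t,\xi)}$, the exponential prefactors being irrelevant.

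The bound $\ab{G_2(t,\xi)}\le(2\pi)^{-3/2}\nr{\F\phi}_\infty\nr{\F\widetilde\phi}_1=:C$ is immediate. For $G_1$ I first note that, since $\phi\in\Sc$, for each $L$ one has $\ab{\F\phi(\xi+\eta)}\le C_L\bra{\xi+\eta}^{-2L}\le C_L'\bra{\xi}^{-2L}\bra{\eta}^{2L}$ by Peetre's inequality, while $\bra{\cdot}^{2L}\F\phi\in L^1$; hence $\ab{G_1(t,\xi)}\le C_L\bra{\xi}^{-2L}$ uniformly in $t$. To gain decay in $t$ I integrate by parts in $\eta$: since $\nabla_\eta e^{-2it(D^2\xi)\cdot\eta}=-2it\,(D^2\xi)\,e^{-2it(D^2\xi)\cdot\eta}$, the first-order operator $(D^2\xi)\cdot\nabla_\eta$ reproduces the phase up to the factor $-2it\ab{D^2\xi}^2$. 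Iterating this $M$ times, transferring the derivatives onto the amplitude $\F\phi(\xi+\eta)\overline{\F\phi(\eta)}$ (no boundary terms, everything being Schwartz), and applying the same rapid-decay/Peetre estimate to each of the finitely many resulting terms — each of which carries a factor $\ab{D^2\xi}^M$ — one obtains
\begin{align*}
\ab{G_1(t,\xi)}\le\frac{C_{L,M}\,\bra{\xi}^{-2L}}{\braa{\ab{t}\,\ab{D^2\xi}}^{M}},\qquad M\in\Nz.
\end{align*}
Combining the cases $M=0$ and $M=2$ and using $\min\{1,s^{-2}\}\le 2(1+s^2)^{-1}$ with $s=\ab{t}\,\ab{D^2\xi}$ gives, for every $L$,
\begin{align*}
\ab{\Phi_{N,\ep}\braa{\phi,\widetilde\phi,\mu;t,\xi}}\le\frac{C_L\,\bra{\xi}^{-2L}}{1+t^2\ab{D^2\xi}^2}.
\end{align*}

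Finally it remains to bound $\ab{D^2\xi}^2$ from below. By definition $D^2=I(\ep)\,\diag(1,\mu^2,\mu^{2N+2})\,I(\ep)$, and since conjugation by the involution $I(\ep)=I_{m(\ep)}$ merely interchanges the first and $m(\ep)$-th diagonal entries, $D^2$ is the diagonal matrix whose $m(\ep)$-th diagonal entry equals $1$; thus $\ab{D^2\xi}^2\ge\xi_{m(\ep)}^2$, which is exactly what is needed when $\ep\neq(0,0,0)$. When $\ep=(0,0,0)$ one has $m(\ep)=1$ and $I(\ep)=I_1$ is the identity, so $D^2=\diag(1,\mu^2,\mu^{2N+2})$ and $\ab{D^2\xi}^2=\xi_1^2+\mu^4\xi_2^2+\mu^{4N+4}\xi_3^2\ge\xi_1^2+\mu^4\xi_2^2$, which settles the remaining case. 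The step requiring the most care is the integration by parts: the derivatives must be taken along the fixed direction $D^2\xi$ so that the gained factor is $(\ab{t}\,\ab{D^2\xi})^{-1}$ rather than the weaker $(\ab{t}\,\ab{\xi})^{-1}$, and one has to verify that the weighted $L^1_\eta$ norms of the (finitely many) differentiated amplitudes remain $O(\bra{\xi}^{-2L})$ uniformly in $t$ and in $M$; because the phase is non-stationary in $\eta$, no genuine stationary-phase analysis is involved.
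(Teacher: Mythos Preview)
Your argument is correct and follows essentially the same route as the paper's: write each factor of $\Phi_{N,\ep}$ as an oscillatory integral in $\eta$ with linear phase $-2it(D^2\xi)\cdot\eta$, then integrate by parts to extract powers of $(|t|\,|D^2\xi|)^{-1}$. The only organizational differences are that you take the directional derivative along $D^2\xi$ and iterate twice on a single factor (the paper integrates by parts once in $\eta_1$ and once in $\eta_2$ and uses the product of the two factors to reach $t^{-2}$), and you pull the full $\bra{\xi}^{-2L}$ out of one factor via Peetre's inequality rather than $\bra{\xi}^{-L}$ from each via the convolution bound $\int\bra{\xi-\eta}^{-L}\bra{\eta}^{-L}\,d\eta\le C\bra{\xi}^{-L}$; neither change affects the substance of the proof.
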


\begin{proof}
It suffices to show the case $\ep =(0,0,0)$
because the other case $\ep \neq (0,0,0)$ 
can be proved more easily.
It follows that
\begin{align*}
&\F \braa{
\overline{U^\mu_{N,\ep}(t)}\phi \times
U^\mu_{N,\ep}(t) \widetilde\phi
}(\xi)
=
(2\pi)^{-3/2}
\braa{
\F \braa{\overline{U^\mu_{N,\ep}(t)}\phi}
\ast
\F \braa{U^\mu_{N,\ep}(t) \widetilde\phi}
}(\xi) \\
&=
(2\pi)^{-3/2}
\int_{\R^3}
\exp\braa{
it  \brab{
(\xi_1 - \eta_1)^2 + 
\mu^2     (\xi_2 - \eta_2)^2 
+
\mu^{2N+2}(\xi_3 - \eta_3)^2 
}}
\\
&\quad \times
\exp\braa{
-it  \brab{
\eta_1^2 + 
\mu^2 \eta_2^2 
+
\mu^{2N+2} \eta_3^2 
}}
\overline{\F^{-1}\phi(\xi - \eta)}\, 
\F \widetilde\phi(\eta)
d\eta 
\\
&=
(2\pi)^{-3/2}
\exp\braa{
it  \braa{
\xi_1^2 + 
\mu^2     \xi_2^2 
+
\mu^{2N+2} \xi_3^2 
}}
\\
&\quad \times
\int_{\R^3}
\exp\braa{
-2it  \braa{
\xi_1 \eta_1 + 
\mu^2 \xi_2 \eta_2
+
\mu^{2N+2} \xi_3 \eta_3 
}}
\overline{\F^{-1}\phi(\xi - \eta)}\, 
\F \widetilde\phi(\eta)
d\eta .
\end{align*} 
Let 
\begin{align*}
\Psi(t,\xi)
=
\int_{\R^3}
\exp\braa{
-2it  \braa{
\xi_1 \eta_1 + 
\mu^2 \xi_2 \eta_2
+
\mu^{2N+2} \xi_3 \eta_3 
}}
\overline{\F^{-1}\phi(\xi - \eta)}\, 
\F \widetilde\phi(\eta)
d\eta .
\end{align*} 
Since $\phi,\widetilde\phi \in \Sc$, 
we obtain for any $L>3$,
\begin{align}\label{ineq:psi-0}
\int_{\R^3}
\ab{ \F^{-1}\phi(\xi - \eta) }
\, 
\ab{ \F \widetilde\phi(\eta) }
d\eta 
\le C
\int_{\R^3}
\bra{\xi - \eta}^{-L}
\bra{\eta}^{-L}
d\eta
\le C
\bra{\xi}^{-L}
\end{align} 
and we hence see that 
\begin{align}\label{ineq:psi-1}
|\Psi(t,\xi)|
\le C
\bra{\xi}^{-L}.
\end{align}  
Furthermore,
by using integration by parts 
with respect to $\eta_1$,
we have  
\begin{align*}
\Psi(t,\xi)
&=
\dfrac{1}{2it\xi_1}
\int_{\R^3}
\exp\braa{
-2it  \braa{
\xi_1 \eta_1 + 
\mu^2 \xi_2 \eta_2
+
\mu^{2N+2} \xi_3 \eta_3 
}} \\
&\quad \times
\brab{
\overline{-\partial_1 \F^{-1}\phi(\xi - \eta)}\, 
\F \widetilde\phi(\eta)
+
\overline{\F^{-1}\phi(\xi - \eta)}\, 
\partial_1 \F \widetilde\phi(\eta)
}
d\eta .
\end{align*} 
From (\ref{ineq:psi-0}),
we see that 
\begin{align}\label{ineq:psi-2}
|\Psi(t,\xi)|
\le
\dfrac{C\bra{\xi}^{-L}}{|t\xi_1|}.
\end{align}  
Using integration by parts with respect to $\eta_2$,
we have  
\begin{align}\label{ineq:psi-3}
|\Psi(t,\xi)|
\le
\dfrac{C\bra{\xi}^{-L}}{|t\mu^2 \xi_2|}.
\end{align}
By (\ref{ineq:psi-1})--(\ref{ineq:psi-3}),
we complete the proof.

\end{proof}

\begin{prop}\label{prop:conv}
Let $N\in \Nz$ and $\ep\in\{ 0,1 \}^3$.
For any 
$(t,\xi)\in \R \times \R^3$,
we have 
\begin{align}\label{conv:psi}
\lim_{\mu \to 0}
\Phi_{N,\ep}(\phi,\widetilde\phi,\mu;t,\xi)
=
\Phi_\ep (\phi,\widetilde\phi;t,\xi).
\end{align} 
\end{prop}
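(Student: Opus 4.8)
The plan is to reduce the statement to the pointwise (in $\xi$) convergence of the underlying Fourier-multiplier symbols. Writing $D^{1,\mu}_{N,\ep}=I(\ep)\diag(1,\mu,\mu^{N+1})I(\ep)$ and recalling that $I(\ep)$ merely permutes the coordinate with index $m(\ep)$ into the first slot, one reads off (exactly as in the two worked examples) that the $m(\ep)$-th component of $D^{1,\mu}_{N,\ep}\xi$ equals $\xi_{m(\ep)}$, while the remaining two components are the other $\xi_j$'s multiplied by $\mu$ and by $\mu^{N+1}$; hence
\begin{align*}
\lim_{\mu\to 0}\ab{D^{1,\mu}_{N,\ep}\xi}^2=\xi_{m(\ep)}^2,\qquad \xi\in\R^3 .
\end{align*}
Consequently, for any fixed $t\in\R$ and any $\psi\in\Sc$, the functions $\xi\mapsto\exp(-it\ab{D^{1,\mu}_{N,\ep}\xi}^2)\F\psi(\xi)$ converge pointwise, as $\mu\to 0$, to $\xi\mapsto\exp(-it\xi_{m(\ep)}^2)\F\psi(\xi)$ and are bounded in modulus by $\ab{\F\psi}\in L^2$; by Plancherel's theorem and the dominated convergence theorem, $U^\mu_{N,\ep}(t)\psi\to U_\ep(t)\psi$ in $L^2$ as $\mu\to 0$.

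Next I would pass from this to convergence of the relevant quadratic quantities. Since each of $U^\mu_{N,\ep}(t)$ and $U_\ep(t)$ is unitary on $L^2$, the splitting
\begin{align*}
\overline{U^\mu_{N,\ep}(t)\phi}\,U^\mu_{N,\ep}(t)\widetilde\phi-\overline{U_\ep(t)\phi}\,U_\ep(t)\widetilde\phi
&=\overline{U^\mu_{N,\ep}(t)\phi}\braa{U^\mu_{N,\ep}(t)\widetilde\phi-U_\ep(t)\widetilde\phi}\\
&\quad+\overline{\braa{U^\mu_{N,\ep}(t)\phi-U_\ep(t)\phi}}\,U_\ep(t)\widetilde\phi
\end{align*}
together with the Cauchy--Schwarz inequality for each term (using $\nr{U^\mu_{N,\ep}(t)\phi}_2=\nr{\phi}_2$ and $\nr{U_\ep(t)\widetilde\phi}_2=\nr{\widetilde\phi}_2$) shows that $\overline{U^\mu_{N,\ep}(t)\phi}\,U^\mu_{N,\ep}(t)\widetilde\phi\to\overline{U_\ep(t)\phi}\,U_\ep(t)\widetilde\phi$ in $L^1$; taking $\widetilde\phi=\phi$ likewise gives $\ab{U^\mu_{N,\ep}(t)\phi}^2\to\ab{U_\ep(t)\phi}^2$ in $L^1$. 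Because $\F$ maps $L^1$ boundedly into $L^\infty$, both factors in the definition of $\Phi_{N,\ep}(\phi,\widetilde\phi,\mu;t,\xi)$ converge, uniformly in $\xi$, to the corresponding factors of $\Phi_\ep(\phi,\widetilde\phi;t,\xi)$; multiplying the two convergent families of complex numbers yields (\ref{conv:psi}).

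There is no serious obstacle here: the only substantive point is the dominated-convergence justification of the symbol limit in the first step. As an alternative that bypasses the $L^2$ step, one may instead start from the explicit oscillatory-integral representation of $\F(\overline{U^\mu_{N,\ep}(t)\phi}\,U^\mu_{N,\ep}(t)\widetilde\phi)(\xi)$ derived in the proof of Proposition \ref{prop:decay}, observe that both the outer phase factor and the phase inside the $\eta$-integral converge pointwise (in $\eta$) to those of the $\mu=0$ integral, which involves only $\xi_{m(\ep)}$ and $\eta_{m(\ep)}$ and reproduces $\F(\overline{U_\ep(t)\phi}\,U_\ep(t)\widetilde\phi)(\xi)$, and then apply the dominated convergence theorem in $\eta$ with the $\mu$-independent dominating function $\ab{\F^{-1}\phi(\xi-\eta)}\,\ab{\F\widetilde\phi(\eta)}$, which is integrable in $\eta$ by the estimate used in that proof; this leads to the same conclusion with essentially the same effort.
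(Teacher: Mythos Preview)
Your proof is correct and is essentially the same as the paper's: both arguments split the quadratic product, use Cauchy--Schwarz together with unitarity of the Fourier-multiplier propagators to reduce to an $L^2$ statement, and obtain that $L^2$ convergence from the pointwise symbol limit via dominated convergence; the only difference is the order of presentation. Your alternative via the explicit oscillatory integral from Proposition~\ref{prop:decay} is also valid and yields the same conclusion.
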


\begin{proof}
For any 
$(t,\xi)\in \R \times \R^3$,
we obtain
\begin{align*}
&
\left|
\F \braa{
\overline{U^\mu_{N,\ep}(t)\phi} \times
U^\mu_{N,\ep}(t) \widetilde\phi
}(\xi)
-
\F \braa{
\overline{U_\ep(t)\phi} \times
U_\ep(t) \widetilde\phi
}(\xi)
\right|
\\
&\quad\le 
\left|
\F
\braa{
\braa{
\overline{U^\mu_{N,\ep}(t)\phi
-
U_\ep(t)\phi
}} \times
U^\mu_{N,\ep}(t) \widetilde\phi
}(\xi)
\right|
\\
&\qquad +
\left|
\F \braa{
\overline{U_\ep(t)\phi}
\times
\braa{
U^\mu_{N,\ep}(t)\widetilde\phi
-
U_\ep(t)\widetilde\phi
}
}(\xi)
\right| 
\\
&\quad\le C
\left\|
\braa{
\overline{U^\mu_{N,\ep}(t)\phi
-
U_\ep(t)\phi
}} \times
U^\mu_{N,\ep}(t) \widetilde\phi
\right\|_1
+ C
\left\|
\overline{U_\ep(t)\phi}
\times
\braa{
U^\mu_{N,\ep}(t)\widetilde\phi
-
U_\ep(t)\widetilde\phi
}
\right\|_1
\\
&\quad\le C
\left\|
U^\mu_{N,\ep}(t)\phi
-
U_\ep(t)\phi
\right\|_2
\,
\left\|
U^\mu_{N,\ep}(t) \widetilde\phi
\right\|_2
+ C
\left\|
U_\ep(t)\phi
\right\|_2
\, 
\left\|
U^\mu_{N,\ep}(t)\widetilde\phi
-
U_\ep(t)\widetilde\phi
\right\|_2  \\
&\quad\le C
\left\|
U^\mu_{N,\ep}(t)\phi
-
U_\ep(t)\phi
\right\|_2
\,
\left\|
\widetilde\phi
\right\|_2
+ C
\left\|
\phi
\right\|_2
\, 
\left\|
U^\mu_{N,\ep}(t)\widetilde\phi
-
U_\ep(t)\widetilde\phi
\right\|_2 . 
\end{align*} 
Since we have for any $\varphi\in \Sc$ and $t\in \R$,
\begin{align*}
\exp\braa{-it\ab{ D^{1,\mu}_{N,\ep}\xi }^2} 
\F \varphi(\xi)
\to
\exp\braa{-it \xi_{m(\ep)}^2} 
\F \varphi(\xi)
\
\text{
in $L^2(\R^3_\xi)$
as $\mu\to 0$,
}
\end{align*} 
we see that (\ref{conv:psi}) holds.
\end{proof}

\begin{prop}\label{prop:limit}
Let $N\in \Nz$ and $\ep\in\{ 0,1 \}^3$.
For any $\alpha\in \Nz^3$ with 
$\alpha_{m(\ep)}\neq 0$,
functions
$\xi^\alpha \Phi_{N,\ep}(\phi,\widetilde\phi,\mu;t,\xi)$ 
($\mu>0$) 
and 
$\xi^\alpha \Phi_\ep (\phi,\widetilde\phi;t,\xi)$
are integrable on 
$\R_t\times \R^3_\xi$.
Furthermore, 
we have 
\begin{align}\label{conv:limit}
\lim_{\mu\to 0}
\int_{\R^{1+3}}
\xi^\alpha 
\Phi_{N,\ep}(\phi,\widetilde\phi,\mu;t,\xi)
d(t,\xi)
=
\int_{\R^{1+3}}
\xi^\alpha 
\Phi_\ep (\phi,\widetilde\phi;t,\xi)
d(t,\xi). 
\end{align} 
\end{prop}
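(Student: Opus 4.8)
The plan is to deduce both assertions at once from the dominated convergence theorem, using a single dominating function that is independent of $\mu$ and whose integrability rests precisely on the hypothesis $\alpha_{m(\ep)}\neq 0$. Fix $L>3$ large enough that $2L>|\alpha|+2$, and recall that $m((0,0,0))=1$. For $\ep\neq (0,0,0)$, Proposition \ref{prop:decay} gives directly, for every $\mu>0$ and every $(t,\xi)$,
\[
\ab{\xi^\alpha\,\Phi_{N,\ep}(\phi,\widetilde\phi,\mu;t,\xi)}
\le g(t,\xi):=\frac{C_L\,\ab{\xi^\alpha}\,\bra{\xi}^{-2L}}{1+t^2\,\xi_{m(\ep)}^2}.
\]
For $\ep=(0,0,0)$ the same bound holds with $m(\ep)=1$: since $t^2(\xi_1^2+\mu^4\xi_2^2)\ge t^2\xi_1^2$, one may simply discard the favourable term $\mu^4\xi_2^2$ in the denominator of Proposition \ref{prop:decay}, and again the resulting $g$ does not depend on $\mu$.

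Next I would check that $g\in L^1(\R_t\times\R^3_\xi)$. Carrying out the elementary one-dimensional integral $\int_\R(1+a^2t^2)^{-1}\,dt=\pi/\ab a$ gives, for almost every $\xi$,
\[
\int_\R g(t,\xi)\,dt=\frac{\pi C_L\,\ab{\xi^\alpha}\,\bra{\xi}^{-2L}}{\ab{\xi_{m(\ep)}}}.
\]
This is exactly where $\alpha_{m(\ep)}\neq 0$ is used: because $\alpha_{m(\ep)}\ge 1$, the quotient $\ab{\xi^\alpha}/\ab{\xi_{m(\ep)}}$ equals $\ab{\xi_{m(\ep)}}^{\alpha_{m(\ep)}-1}\prod_{j\neq m(\ep)}\ab{\xi_j}^{\alpha_j}$, which is a polynomial in $\xi$ of degree $|\alpha|-1$ with no singularity across the hyperplane $\xi_{m(\ep)}=0$; without this cancellation the factor $\ab{\xi_{m(\ep)}}^{-1}$ would not be locally integrable on $\R^3$. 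Since $2L>(|\alpha|-1)+3$, the $\xi$-integral of this polynomial against $\bra{\xi}^{-2L}$ converges, so Tonelli's theorem gives $g\in L^1(\R^{1+3})$. In particular $\xi^\alpha\Phi_{N,\ep}(\phi,\widetilde\phi,\mu;\cdot)$ is integrable on $\R^{1+3}$ for each $\mu>0$; and since Proposition \ref{prop:conv} yields $\ab{\xi^\alpha\Phi_\ep(\phi,\widetilde\phi;t,\xi)}=\lim_{\mu\to 0}\ab{\xi^\alpha\Phi_{N,\ep}(\phi,\widetilde\phi,\mu;t,\xi)}\le g(t,\xi)$, the function $\xi^\alpha\Phi_\ep(\phi,\widetilde\phi;\cdot)$ is integrable as well.

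Finally, (\ref{conv:limit}) follows from the dominated convergence theorem: Proposition \ref{prop:conv} gives the pointwise convergence $\xi^\alpha\Phi_{N,\ep}(\phi,\widetilde\phi,\mu;t,\xi)\to\xi^\alpha\Phi_\ep(\phi,\widetilde\phi;t,\xi)$ as $\mu\to 0$ for every $(t,\xi)$, and the $\mu$-uniform bound by $g\in L^1$ just established lets me pass to the limit under the integral sign (applied along an arbitrary sequence $\mu_n\to 0$, which suffices since the limit does not depend on the sequence). I expect the only genuine obstacle to be the construction of the dominating function $g$: it must be simultaneously independent of $\mu$ — handled in the case $\ep=(0,0,0)$ by dropping the $\mu^4\xi_2^2$ term — and integrable over $\R^{1+3}$ after integrating out $t$, which forces one to use $\alpha_{m(\ep)}\ge 1$ to absorb the singularity $\ab{\xi_{m(\ep)}}^{-1}$ produced by the $t$-integration. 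The remaining ingredients — the pointwise limit (Proposition \ref{prop:conv}) and the decay estimate (Proposition \ref{prop:decay}) — are already available, and the rest is a routine application of dominated convergence.
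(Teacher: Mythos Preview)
Your proof is correct and follows essentially the same approach as the paper: bound $\xi^\alpha\Phi_{N,\ep}$ uniformly in $\mu$ by the function $g(t,\xi)=C_L\ab{\xi^\alpha}\bra{\xi}^{-2L}/(1+t^2\xi_{m(\ep)}^2)$, use $\alpha_{m(\ep)}\ge 1$ to absorb the $\ab{\xi_{m(\ep)}}^{-1}$ from the $t$-integral, and then apply Proposition~\ref{prop:conv} with dominated convergence. You are in fact more explicit than the paper about why the $\ep=(0,0,0)$ case reduces to the same dominating function (by dropping $\mu^4\xi_2^2$) and about the role of $\alpha_{m(\ep)}\neq 0$; the only other cosmetic difference is that you derive the integrability of $\xi^\alpha\Phi_\ep$ from the pointwise limit and the bound by $g$, whereas the paper simply remarks that it is ``shown analogously.''
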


\begin{proof}
Let $L>|\alpha|+3$.
It follows from Proposition \ref{prop:decay} that 
\begin{align*}
\ab{
\xi^\alpha
\Phi_{N,\ep}(\phi,\widetilde\phi,\mu;t,\xi)
}
\le
\dfrac{C\bra{\xi}^{-2L}
\left| \xi^\alpha \right|
}{1+t^2 \xi_{m(\ep)}^2}.
\end{align*} 
Since $\alpha_{m(\ep)}\neq 0$,
we have
\begin{align*}
\int_{\R^{1+3}}
\dfrac{\bra{\xi}^{-2L}
\left| \xi^\alpha \right|
}{1+t^2 \xi_{m(\ep)}^2}
d(t,\xi)
&\le C
\int_{\R}
\dfrac{dt}{1+t^2}
\cdot
\int_{\R^{1+3}}
\bra{\xi}^{-2L}
|\xi^\alpha|
\,
|\xi_{m(\ep)}|^{-1}
d\xi 
\\
&\le C
\int_{\R^{1+3}}
\bra{\xi}^{-|\alpha|-7}
d\xi 
\\
&< \infty.
\end{align*} 
Thus,
for any $\mu>0$, 
the function 
$\xi^\alpha \Phi_{N,\ep}(\phi,\widetilde\phi,\mu;t,\xi)$ 
is integrable on 
$\R_t\times \R^3_\xi$.
The integrability of 
$\xi^\alpha \Phi_\ep (\phi,\widetilde\phi;t,\xi)$
is shown analogously.
We see from 
Proposition \ref{prop:conv} 
and 
the Lebesgue dominated convergence theorem
that 
(\ref{conv:limit}) holds.
\end{proof}

\begin{prop}\label{prop:log}
Let $N\in \Nz$ and $\ep = (0,0,0)$. 
Assume that $\phi\in \Sc \setminus \{ 0 \}$. 
Let $m,l\in \Nz$.
Then there exist 
positive numbers 
$\mu_1$,
$C_1$ and $C_2$ such that  
for any $\mu\in (0,\mu_1)$,
\begin{align}\label{ineq:log}
C_1 
\le 
\int_{\R^{1+3}}
\xi_2^{2m} \xi_3^{2l}
\Phi_{N,\ep}(\phi,\phi,\mu;t,\xi)
d(t,\xi)
\le
C_2
\braa{1+\left| \log\mu \right|}.
\end{align} 
\end{prop}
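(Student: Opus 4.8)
The plan is as follows. Write $g(\mu)$ for the integral appearing in (\ref{ineq:log}). Since $\widetilde\phi=\phi$ here, we have $\F\bigl(\overline{U^\mu_{N,\ep}(t)\phi}\,U^\mu_{N,\ep}(t)\phi\bigr)=\F|U^\mu_{N,\ep}(t)\phi|^2$, whence
\[
\Phi_{N,\ep}(\phi,\phi,\mu;t,\xi)=\bigl|\F|U^\mu_{N,\ep}(t)\phi|^2(\xi)\bigr|^2\ge 0 .
\]
In particular the integrand of $g(\mu)$ is nonnegative, which is what makes the lower bound accessible.

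For the upper bound I use the $\ep=(0,0,0)$ case of Proposition \ref{prop:decay}: for every $L>3$ there is $C_L>0$ (independent of $\mu$, as that proof shows) with $\Phi_{N,\ep}(\phi,\phi,\mu;t,\xi)\le C_L\bra{\xi}^{-2L}\bigl(1+t^2(\xi_1^2+\mu^4\xi_2^2)\bigr)^{-1}$. Integrating first in $t$ via $\int_\R(1+t^2s^2)^{-1}\,dt=\pi/|s|$ gives
\[
g(\mu)\le\pi C_L\int_{\R^3}\frac{\bra{\xi}^{-2L}\,\xi_2^{2m}\xi_3^{2l}}{\sqrt{\xi_1^2+\mu^4\xi_2^2}}\,d\xi .
\]
I then carry out the $\xi_1$-integral for fixed $(\xi_2,\xi_3)$, splitting at $|\xi_1|=1$. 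On $|\xi_1|\le 1$ I use $\bra{\xi}^{-2L}\le\bra{(\xi_2,\xi_3)}^{-2L}$ together with $\int_{-1}^{1}(\xi_1^2+a^2)^{-1/2}\,d\xi_1=2\,\mathrm{arcsinh}(1/a)\le C(1+|\log a|)$ for $a=\mu^2|\xi_2|$, and $|\log a|\le 2|\log\mu|+|\log|\xi_2||$. On $|\xi_1|>1$ I use $(\xi_1^2+\mu^4\xi_2^2)^{-1/2}\le|\xi_1|^{-1}$, the elementary inequality $\bra{\xi}^{-2L}\le\bra{\xi_1}^{-L}\bra{(\xi_2,\xi_3)}^{-L}$, and $\int_{|\xi_1|>1}|\xi_1|^{-1}\bra{\xi_1}^{-L}\,d\xi_1<\infty$. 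This yields
\[
g(\mu)\le C\int_{\R^2}\bigl(1+|\log\mu|+|\log|\xi_2||\bigr)\,\bra{(\xi_2,\xi_3)}^{-L}\,\xi_2^{2m}\xi_3^{2l}\,d\xi_2\,d\xi_3 ,
\]
and since $\phi\in\Sc$ I may take $L$ as large as needed so that $\bra{(\xi_2,\xi_3)}^{-L}\xi_2^{2m}\xi_3^{2l}$ and $\bra{(\xi_2,\xi_3)}^{-L}\xi_2^{2m}\xi_3^{2l}|\log|\xi_2||$ are both integrable over $\R^2$; the bound $g(\mu)\le C_2(1+|\log\mu|)$ follows.

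For the lower bound I combine nonnegativity with Proposition \ref{prop:conv}, which gives $\Phi_{N,\ep}(\phi,\phi,\mu;t,\xi)\to\Phi_\ep(\phi,\phi;t,\xi)=|\F|U_\ep(t)\phi|^2(\xi)|^2$ as $\mu\to 0$ for every $(t,\xi)$. Fatou's lemma then yields
\[
\liminf_{\mu\to 0}g(\mu)\ge\int_{\R^{1+3}}\xi_2^{2m}\xi_3^{2l}\,\Phi_\ep(\phi,\phi;t,\xi)\,d(t,\xi)=:c_0 .
\]
Here $c_0>0$: the integrand is nonnegative, and for each $t$ the function $|U_\ep(t)\phi|^2$ is a nonzero element of $L^1$ (because $\phi\ne 0$ and $U_\ep(t)$ is unitary on $L^2$), so its Fourier transform is a nonzero continuous function, hence $\Phi_\ep(\phi,\phi;t,\cdot)$ is positive on a nonempty open set while $\xi_2^{2m}\xi_3^{2l}>0$ a.e.; the value $c_0$ may be $+\infty$, but that is irrelevant. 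Consequently there is $\mu_1>0$ with $g(\mu)\ge C_1:=\min\{1,c_0/2\}>0$ for all $\mu\in(0,\mu_1)$, which is the left inequality in (\ref{ineq:log}). The only genuine computation is the upper bound, and its crux is that dropping the $\mu^4\xi_2^2$ term in Proposition \ref{prop:decay} would already make the $\xi_1$-integral diverge; it is precisely the $\mathrm{arcsinh}$ coming from $\int(\xi_1^2+\mu^4\xi_2^2)^{-1/2}\,d\xi_1$ that produces the factor $|\log\mu|$.
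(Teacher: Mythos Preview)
Your proof is correct and follows essentially the same route as the paper. Both arguments use nonnegativity of $\Phi_{N,\ep}(\phi,\phi,\mu;\cdot)$ together with Proposition~\ref{prop:conv} and Fatou's lemma for the lower bound, and for the upper bound both insert the decay estimate from Proposition~\ref{prop:decay}, integrate in $t$ to produce $(\xi_1^2+\mu^4\xi_2^2)^{-1/2}$, and split the $\xi_1$-integral at $|\xi_1|=1$, with the logarithm arising from $\int_{-1}^{1}(\xi_1^2+\mu^4\xi_2^2)^{-1/2}\,d\xi_1$; your use of $\mathrm{arcsinh}$ and the paper's explicit antiderivative are the same computation. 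Your remark that the constant $C_L$ in Proposition~\ref{prop:decay} can be taken independent of $\mu$ is correct and is indeed what the proof of that proposition gives, even though the statement there does not say so explicitly.
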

\begin{proof}
Since 
the function 
$\Phi_\ep(\phi,\phi;t,\xi)$ 
is continuous on $\R\times \R^3$, 
non-negative and 
does not equal to a zero function,
we see that 
\begin{align*}
C_3 :=
\int_{\R^{1+3}}
\xi_2^{2m} \xi_3^{2l}
\Phi_\ep(\phi,\phi;t,\xi)
d(t,\xi)
\in (0,\infty].
\end{align*} 
It follows from 
Proposition \ref{prop:conv} and Fatou's lemma
that 
\begin{align*}
C_3 
&=
\int_{\R^{1+3}}
\xi_2^{2m} \xi_3^{2l}
\liminf_{\mu\to 0}
\Phi_{N,\ep}(\phi,\phi,\mu;t,\xi)
d(t,\xi) 
\\
&\le
\liminf_{\mu\to 0}
\int_{\R^{1+3}}
\xi_2^{2m} \xi_3^{2l}
\Phi_{N,\ep}(\phi,\phi,\mu;t,\xi)
d(t,\xi) .
\end{align*} 
Therefore, 
there exists some $\mu_1 \in (0,1)$ 
such that 
for any $\mu\in (0,\mu_1)$,
\begin{align}\label{ineq:lower}
\min\left\{  
\dfrac{C_3}{2},\,
1
\right\}
\le
\int_{\R^{1+3}}
\xi_2^{2m} \xi_3^{2l}
\Phi_{N,\ep}(\phi,\phi,\mu;t,\xi)
d(t,\xi) .
\end{align} 

Henceforth,
we suppose that $\mu \in (0,\mu_1)$.
Let $L$ be a sufficiently large positive number. 
By Proposition \ref{prop:decay},
we have 
\begin{align*}
\int_{\R^{1+3}}
&
\xi_2^{2m} \xi_3^{2l}
\Phi_{N,\ep}(\phi,\phi,\mu;t,\xi)
d(t,\xi) 
\\
&\le C
\int_{\R^2}
\xi_2^{2m} \xi_3^{2l}
\braa{
\int_{\R^{1+1}}
\dfrac{\bra{\xi}^{-2L}}{
1+t^2\braa{\xi_1^2 + \mu^4 \xi_2^2}
}
d(t,\xi)
}
d(\xi_2,\xi_3)
\\
&\le C
\int_{\R^2}
\dfrac{\xi_2^{2m} \xi_3^{2l}}{
\braa{1+\xi_2^2 + \xi_3^2}^{L/2}}
\braa{
\int_\R
\braa{
\int_\R
\dfrac{\bra{\xi_1}^{-L}}{
1+t^2\braa{\xi_1^2 + \mu^4 \xi_2^2}
}
dt
}
d\xi_1
}
d(\xi_2,\xi_3).
\end{align*} 
Furthermore, we obtain 
\begin{align*}
\int_{|\xi_1|\ge 1}
\braa{
\int_\R
\dfrac{\bra{\xi_1}^{-L}}{
1 + t^2
\braa{\xi_1^2 + \mu^4\xi_2^2}
}
dt
}
d\xi_1
& \le
\int_{|\xi_1|\ge 1}
\bra{\xi_1}^{-L} |\xi_1|^{-1}
d\xi_1
\cdot
\int_\R
\dfrac{dt}{1+t^2} \\
& \le
\pi
\int_\R
\bra{\xi_1}^{-L}
d\xi_1 
\end{align*} 
and 
\begin{align*}
\int_{|\xi_1|< 1}
&
\braa{
\int_\R
\dfrac{\bra{\xi_1}^{-L}}{
1 + t^2
\braa{\xi_1^2 + \mu^4\xi_2^2}
}
dt
}
d\xi_1  
\\
& \le
\int_{-1}^1
\braa{
\int_\R
\dfrac{dt}{
1 + t^2
\braa{\xi_1^2 + \mu^4\xi_2^2}
}}
d\xi_1
=
\int_{-1}^1
\dfrac{d\xi_1}{
\sqrt{
\xi_1^2 + \mu^4 \xi_2^2
}}
\cdot
\int_\R
\dfrac{dt}{1+t^2}
\\
&=
\pi
\log\braa{\braa{
1+\sqrt{1+\mu^4 \xi_2^2}
}^2}
-
2\pi \log \mu 
-
\pi \log |\xi_2|
\\
&\le
2\pi 
\log\braa{
1+\sqrt{1+\xi_2^2}
}
-
2\pi \log \mu 
-
\pi \log |\xi_2| .
\end{align*}
Thus, we see that
\begin{align}\label{ineq:upper}
&
\int_{\R^{1+3}}
\xi_2^{2m} \xi_3^{2l}
\Phi_{N,\ep}(\phi,\phi,\mu;t,\xi)
d(t,\xi)  
\nonumber
\\
&\le C
\int_{\R^2}
\dfrac{\xi_2^{2m} \xi_3^{2l}}{
\braa{1+\xi_2^2 + \xi_3^2}^{L/2}}
\brab{
1
+
\log\braa{
1+\sqrt{1+\xi_2^2}}
+
\big| 
\log |\xi_2|
\big|
-
\log\mu
}
d(\xi_2,\xi_3)
\nonumber
\\
&\le
C\braa{
1+ | \log \mu |
}.
\end{align} 
From (\ref{ineq:lower}) and (\ref{ineq:upper}),
we have (\ref{ineq:log}).
\end{proof}

We next prove Lemma \ref{lem:M}.
If $N=0$, then we easily see that the lemma holds.
For other cases, we show dividing three steps.\\
(Step I)  
Let $N\in \N$ and $\ep\in\{ 0,1 \}^3$.
Then for any $2\le M \le N^\ast$, 
we have 
\begin{align*}
\sum_{j=1}^M 
(N+1)^{2(j-1)}
P_{N,\ep}
&
\braa{
\alpha(N,\ep;N^\ast + 1 -j)}
\\
&\le
(N+1)^{2(M-1)}
P_{N,\ep}
\braa{
\alpha(N,\ep;N^\ast + 1 -M)}
\\
&\quad +
\sum_{j=1}^{M-1}
(N+1)^{2(j-1)}
P_{N,\ep}\braa{
\alpha(N,\ep;N^\ast + 1 -j)}
\\
&\le
(N+1)^{2(M-1)}
P_{N,\ep}
\braa{
\alpha(N,\ep;N^\ast + 1 -M)}
\\
&\quad +
\dfrac{
(N+1)^{2(M-1)}-1
}{
(N+1)^2 -1 
}
\times N(N+1)
\\
&\le
(N+1)^{2(M-1)}
\brab{
P_{N,\ep}
\braa{
\alpha(N,\ep;N^\ast + 1 -M)}
+
\dfrac{N(N+1)}{N(N+2)}
}
\\
&\le
(N+1)^{2(M-1)}
P_{N,\ep}
\braa{
\alpha(N,\ep;N^\ast + 2 -M)}.
\end{align*}
(Step II)                 
Fix $N\in \N$.
Let $\mf{S}$ be the symmetric group of degree $N^\ast$.
We define $\sigma_0 \in \mf{S}$ by 
\begin{align*}
\sigma_0(j) = N^\ast +1-j,
\quad
j=1,\cdots,N^\ast .
\end{align*} 
Furthermore,
we define
\begin{align*}
Q_{N,\ep}(j,k)
&=
2(N+1)^{2(j-1)}
P_{N,\ep}\braa{
\alpha(N,\ep;k)},
\\
Q_{N,\ep}
&=
\sum_{j=1}^{N^\ast}
Q_{N,\ep}(j,\sigma_0(j)),
\\
\widetilde Q_{N,\ep}
&=
\inf_{\sigma \in \mf{S}\setminus\{ \sigma_0 \}}
\sum_{j=1}^{N^\ast}
Q_{N,\ep}(j,\sigma(j))
\quad
\text{if $N\ge 1$.}
\end{align*} 
Let $\sigma \in \mf{S}\setminus\{ \sigma_0 \}$.
Then 
there exists some 
$M\in \brab{1,\cdots,N^\ast }$ 
such that
$\sigma(M)>\sigma_0(M) = N^\ast + 1 - M$,
and such that 
if $M<N^\ast$ then 
$\sigma(j)=\sigma_0(j)$
($M+1\le j \le N^\ast$).
Therefore,
we obtain 
\begin{align*}
Q_{N,\ep}
&=
\sum_{j=1}^M
Q_{N,\ep}(j,\sigma_0(j))
+
\sum_{j=M+1}^{N^\ast}
Q_{N,\ep}(j,\sigma_0(j))
\\
&\le
2\sum_{j=1}^M 
(N+1)^{2(j-1)}
P_{N,\ep}\braa{
\alpha(N,\ep;N^\ast + 1 -j)}
+
\sum_{j=M+1}^{N^\ast}
Q_{N,\ep}(j,\sigma(j)),
\end{align*} 
where
a term 
$\sum_{j=N^\ast + 1}^{N^\ast} a_j$
is understood as 0.
We see from 
Step I 
and 
$\sigma(M)\ge N^\ast +2 -M$
that 
\begin{align*}
Q_{N,\ep}
&<
2 
(N+1)^{2(M-1)}
P_{N,\ep}
\braa{
\alpha(N,\ep;N^\ast + 2 -M)}
+
\sum_{j=M+1}^{N^\ast}
Q_{N,\ep}(j,\sigma(j))
\\
&\le
2\sum_{j=1}^M
(N+1)^{2(j-1)}
P_{N,\ep}
\braa{
\alpha(N,\ep;\sigma(j))}
+
\sum_{j=M+1}^{N^\ast}
Q_{N,\ep}(j,\sigma(j))
\\
&=
\sum_{j=1}^{N^\ast}
Q_{N,\ep}(j,\sigma(j)).
\end{align*} 
Therefore,
we have 
$Q_{N,\ep} < \widetilde{Q}_{N,\ep}$.\\
(Step III)      
Fix $N\in \N$.
For $\mu>0$, 
we set 
\begin{align*}
C_{N,\ep}(j,k;\mu)
&=
\int_{\R^{1+3}}
\xi^{2\alpha(N,\ep;k)+\ep}
\Phi_{N,\ep}\braa{
\phi,\widetilde\phi,\mu^{(N+1)^{2(j-1)}};
t,\xi
}
d(t,\xi),
\quad
1\le j,k \le N^\ast,
\\
A(\mu)
&=
\braa{
\mu^{Q_{N,\ep}(j,k)}
C_{N,\ep}(j,k;\mu)
}_{1\le j,k \le N^\ast}.
\end{align*} 
Then it suffices to show that 
for some $\widetilde\mu>0$,
we have  
$\det A(\mu) \neq 0$ 
($\mu\in (0,\widetilde\mu)$).

We see from 
the assumptions of $\phi$ and $\widetilde\phi$, 
Propositions \ref{prop:limit} and \ref{prop:log}
that
there exist some  
$\mu_1 \in (0,1)$ and  $C_1,C_2>0$
such that
for any $\mu \in (0,\mu_1)$,
\begin{align*}
C_1 
\le 
\left|
C_{N,\ep}(j,k;\mu)
\right| 
\le
C_2
\braa{
1+| \log\mu |
}.
\end{align*} 
Therefore, 
for any $\mu\in (0,\mu_1)$,
it follows from Step II that  
\begin{align*}
&\left| \det A(\mu) \right| 
\\
&\ge
\prod_{j=1}^{N^\ast}
\mu^{Q_{N,\ep}(j,\sigma_0(j))}
\left| C_{N,\ep}(j,\sigma_0(j);\mu) \right| 
 -
\sum_{\sigma\in \mf{S}\setminus \{ \sigma_0\}}
\prod_{j=1}^{N^\ast}
\mu^{Q_{N,\ep}(j,\sigma(j))}
\left| C_{N,\ep}(j,\sigma(j);\mu) \right| 
\\
&\ge
C_1^{N^\ast}\mu^{Q_{N,\ep}}
-
N^\ast !\,
C_2^{N^\ast}
\braa{1+| \log\mu |}^{N^\ast}
\mu^{\widetilde{Q}_{N,\ep}}
\\
&\ge
\braa{
C_1^{N^\ast}
-
N^\ast !\,
C_2^{N^\ast}
\braa{1+| \log\mu |}^{N^\ast}
\mu
}
\mu^{Q_{N,\ep}}.
\end{align*} 
Since 
\begin{align*}
\lim_{\mu\to 0}
\braa{1+| \log\mu |}^{N^\ast}\mu
= 0,
\end{align*} 
there exists $\widetilde\mu>0$ such that 
\begin{align*}
C_1^{N^\ast}
-
N^\ast !\,
C_2^{N^\ast}
\braa{1+| \log\mu |}^{N^\ast}
\mu
>0,
\quad
\mu\in (0,\widetilde\mu),
\end{align*} 
which completes the proof.

\section{Proof of Main Theorems}\label{sec:proof}
In this section,
we prove main results.
Throughout this section, 
we fix $N\in \Nz$, $\ep\in \{ 0, 1 \}^3$ and              
$\phi,\widetilde\phi\in \Sc \setminus\{ 0 \}$.
Furthermore, 
we assume that 
$\phi = \widetilde\phi$ and 
(C1) holds if $|\ep|=0$,  
and assume that 
(C1)--(C3) hold if $|\ep|\neq 0$. 
Fix $\mu\in (0,\widetilde\mu)$,
where $\widetilde\mu$ is the positive number 
mentioned in Lemma \ref{lem:M}.
Then the matrix 
$\mf{M}_{N,\ep}[\phi,\widetilde\phi,\mu]$ 
is invertible.
We remark that 
all positive constants $C$ 
which appear in this section 
are independent of the positive parameter $\lambda$.

\subsection{Proof of Theorem \ref{thm:main-1}}
We assume that            
the interaction potential $V$ 
of the equation (\ref{eq:H})
satisfies (V1).
We see from Theorem \ref{thm:direct}(3) 
that 
for any $\psi \in \Sc$, 
the scattering operator $S$
is expressed by 
\begin{align*}
S(\vep \psi)
=
\vep\psi
- i
\int_{\R}
U(-t)F(u_\vep(t))
dt,
\end{align*} 
where
$\vep$ is sufficiently small 
and 
$u_\vep$ is the time-global solution 
to (\ref{eq:H}) satisfying  
\begin{align*}
\lim_{t\to -\infty}
\nr{ u(t) - U(t)(\vep\psi) }_{H^1}
=
0.
\end{align*} 
By the inequality (\ref{ineq:u - U phi}),
we have 
\begin{align*}
&
\ab{
i\vep^{-3}
\bra{(S-\id)(\vep\psi),\widetilde\psi}
-
(2\pi)^{3/2}
\int_{\R^{1+3}}
\F V(\xi)
\Phi_{N,\ep}(\psi,\widetilde\psi,1;t,\xi)
d(t,\xi)
}
\\
&\le 
C
\vep^2 \| \psi \|_{H^1}^5 \nr{\widetilde\psi}_{H^1}.
\end{align*} 
Substituting 
$\psi = \phi\circ D_{N,\ep}^{\lambda,\mu}$,
$\widetilde\psi = \widetilde\phi\circ D_{N,\ep}^{\lambda,\mu}$ 
and 
$\vep = \lambda^{N+4}$
for the above inequality,
we obtain 
for sufficiently small $\lambda>0$,
\begin{align*}
&
\Bigg|
i\lambda^{-3N-12}
\bra{(S-\id)(
\lambda^{N+4}
\phi\circ D_{N,\ep}^{\lambda,\mu}),
\widetilde\phi\circ D_{N,\ep}^{\lambda,\mu}}
\\
&\quad -
(2\pi)^{3/2}
\int_{\R^{1+3}}
\F V(\xi)
\Phi_{N,\ep}(
\phi\circ D_{N,\ep}^{\lambda,\mu},
\widetilde\phi\circ D_{N,\ep}^{\lambda,\mu},
1;t,\xi)
d(t,\xi)
\Bigg|
\\
&\le 
C
\lambda^{2N+8} 
\nr{          \phi\circ D_{N,\ep}^{\lambda,\mu}}_{H^1}^5 
\nr{\widetilde\phi\circ D_{N,\ep}^{\lambda,\mu}}_{H^1}
\\
&\le
C
\lambda^{2N+8} \lambda^{-\frac{3}{2}\cdot 6}
\nr{          \phi}_{H^1}^5 
\nr{\widetilde\phi}_{H^1}.
\end{align*} 
Since                           
\begin{align*}
&
\int_{\R^{1+3}}
\F V(\xi)
\Phi_{N,\ep}(
\phi\circ D_{N,\ep}^{\lambda,\mu},
\widetilde\phi\circ D_{N,\ep}^{\lambda,\mu},
1;t,\xi)
d(t,\xi)
\\
&=
\lambda^{-5}\mu^{-N-2}
\int_{\R^{1+3}}
\F V(D_{N,\ep}^{\lambda,\mu}\xi)
\Phi_{N,\ep}(
\phi,
\widetilde\phi,
\mu;t,\xi)
d(t,\xi),
\end{align*} 
we have 
\begin{align*}
&
\Bigg|
\dfrac{i\lambda^{-3N-7}\mu^{N+2}}{(2\pi)^{3/2}}
\bra{(S-\id)(
\lambda^{N+4}
\phi\circ D_{N,\ep}^{\lambda,\mu}),
\widetilde\phi\circ D_{N,\ep}^{\lambda,\mu}}
\\
& \qquad\qquad -
\int_{\R^{1+3}}
\F V(D_{N,\ep}^{\lambda,\mu}\xi)
\Phi_{N,\ep}(
\phi,
\widetilde\phi,
\mu;t,\xi)
d(t,\xi)
\Bigg|
\\
&\le
C
\lambda^{2N+4} 
\nr{          \phi}_{H^1}^5 
\nr{\widetilde\phi}_{H^1}.
\end{align*} 
Therefore,
it follows that 
\begin{align}
&
\Bigg|
\dfrac{i\mu^{N+2}}{(2\pi)^{3/2}}
\lambda^{-2N-|\ep|}
\vD_\lambda^{2N+|\ep|}
\brab{
\lambda^{-3N-7}
\bra{(S-\id)(
\lambda^{N+4}
\phi\circ D_{N,\ep}^{\lambda,\mu}),
\widetilde\phi\circ D_{N,\ep}^{\lambda,\mu}}
}
\nonumber
\\
& \qquad\qquad\qquad -
\lambda^{-2N-|\ep|}
\vD_\lambda^{2N+|\ep|}
\int_{\R^{1+3}}
\F V(D_{N,\ep}^{\lambda,\mu}\xi)
\Phi_{N,\ep}(
\phi,
\widetilde\phi,
\mu;t,\xi)
d(t,\xi)
\Bigg|
\nonumber
\\
&\le
C
\lambda \label{ineq:pr-thm-1}
\end{align} 
for sufficiently small $\lambda>0$.

We now define          
\begin{align*}
f(\lambda)
=
\int_{\R^{1+3}}
\F V(D_{N,\ep}^{\lambda,\mu}\xi)
\Phi_{N,\ep}(
\phi,
\widetilde\phi,
\mu;t,\xi)
d(t,\xi).
\end{align*} 
Since $V$ satisfies the condition (V1),
we see from Proposition \ref{prop:decay} 
that 
$f(\lambda)$ is smooth on $(0,\lambda_0)$ 
for some $\lambda_0>0$.
Using Proposition \ref{prop:vD} below,
we obtain 
\begin{align}\label{ineq:pr-thm-2}
\ab{
\lambda^{-2N-|\ep|}
\vD_\lambda^{2N+|\ep|}
f(\lambda)
-
\dfrac{\partial^{2N+|\ep|}}{
\partial \lambda^{2N+|\ep|}}
f(\lambda)
} 
\le 
C\lambda
\end{align} 
for any $\lambda\in (0,\lambda_0/(2N+4))$.
Furthermore, 
we see that      
\begin{align*}
\dfrac{\partial^{2N+|\ep|}}{
\partial \lambda^{2N+|\ep|}}
f(\lambda)
=
\sum_{|\beta|=2N+|\ep|}
\dfrac{|\beta|!}{\beta !}
\mu^{P_{N,\ep}(\beta)}
\int_{\R^{1+3}}
 \xi^\beta
\Pxi^\beta
\F V(D_{N,\ep}^{\lambda,\mu}\xi)
\Phi_{N,\ep}(
\phi,
\widetilde\phi,
\mu;t,\xi)
d(t,\xi).
\end{align*} 
For any $\lambda>0$,    
$\beta\in \Nz^3$ and $\xi\in \R^3$,
we see from the condition (V1) that 
\begin{align*}
&
\ab{
\Pxi^\beta \F V(D_{N,\ep}^{\lambda,\mu}\xi)
-
\Pxi^\beta \F V(0)
} 
\\
&\le
(2\pi)^{-3/2}
\nr{
x^\beta V(x) 
\braa{
\exp\braa{-iD_{N,\ep}^{\lambda,\mu}\xi \cdot x} -1
}
}_{L^1(\R^3_x)}
\\
&\le
(2\pi)^{-3/2}
\nr{
e^{-A|x|/2} 
\braa{
\exp\braa{-iD_{N,\ep}^{\lambda,\mu}\xi \cdot x} -1
}
}_{L^\infty(\R^3_x)}
\nr{
x^\beta e^{A|x|/2} V(x) 
}_{L^1(\R^3_x)}
\\
&\le
C
|\lambda \xi| .
\end{align*} 
Hence we see that 
\begin{align*}
\ab{
\dfrac{\partial^{2N+|\ep|}}{
\partial \lambda^{2N+|\ep|}}
f(\lambda)
-
\sum_{|\beta|=2N+|\ep|}
\dfrac{|\beta|!}{\beta !}
\mu^{P_{N,\ep}(\beta)}
\Pxi^\beta
\F V(0)
\int_{\R^{1+3}}
 \xi^\beta
\Phi_{N,\ep}(
\phi,
\widetilde\phi,
\mu;t,\xi)
d(t,\xi)
}
\le
C\lambda
\end{align*} 
for any $\lambda\in (0,\lambda_0/(2N+4))$.
By conditions             
for $\phi$ and $\widetilde\phi$,
we have 
\begin{align*}
\int_{\R^{1+3}}
 \xi^{2\alpha + \widetilde\ep}
\Phi_{N,\ep}(
\phi,
\widetilde\phi,
\mu;t,\xi)
d(t,\xi)
= 0
\end{align*} 
for any $\alpha \in \Nz^3$ and 
for any 
$\widetilde\ep \in \{ 0,1 \}^3 \setminus \{ \ep \}$.
Therefore,
it follows that 
\begin{align}
&
\Bigg|
\dfrac{\partial^{2N+|\ep|}}{
\partial \lambda^{2N+|\ep|}}
f(\lambda)
-
\Bigg\{
\sum_{k=1}^{N\ast}
\dfrac{(2N+|\ep|)!}{\braa{2\alpha(N,\ep;k)+\ep}!}
\mu^{P_{N,\ep}\braa{2\alpha(N,\ep;k)+\ep}}
\nonumber
\\
&\quad \times
\Pxi^{2\alpha(N,\ep;k)+\ep}
\F V(0)
\int_{\R^{1+3}}
 \xi^{2\alpha(N,\ep;k)+\ep}
\Phi_{N,\ep}(
\phi,
\widetilde\phi,
\mu;t,\xi)
d(t,\xi)
\Bigg\}
\Bigg|
\nonumber
\\
&\le
C\lambda
\label{ineq:pr-thm-3}
\end{align} 
for any $\lambda\in (0,\lambda_0/(2N+4))$.

Therefore,      
we see from 
(\ref{ineq:pr-thm-1})--(\ref{ineq:pr-thm-3})
that 
there exists some positive number
$\widetilde\lambda$ such that
\begin{align*}
\ab{
\mf{J}^\lambda_{N,\ep} 
\left[ \phi,\widetilde\phi,\mu \right]
-
\mf{M}_{N,\ep}
\left[ \phi,\widetilde\phi,\mu \right]
\mf{a}_{N,\ep}
}
\le
C\lambda,
\quad
\lambda\in (0,\widetilde\lambda).
\end{align*} 
Since 
the matrix 
$\mf{M}_{N,\ep}[\phi,\widetilde\phi,\mu]$ 
is invertible,
the proof is complete.

\subsection{Proof of Corollary \ref{cor:main-2}}
Let $j=1,2$.
Suppose that $V_j$ satisfies 
(V1) with $V=V_j$.
Let $S_j$ be the scattering operator 
for the equation (\ref{eq:H}) with $V=V_j$.
We denote $\mf{a}_{N,\ep}$ 
(resp. 
$\mf{J}^\lambda_{N,\ep} [\phi,\widetilde\phi,\mu]$)
with $V=V_j$  
by $\mf{a}_{N,\ep}^j$
(resp. 
$\mf{J}^{\lambda,j}_{N,\ep} [\phi,\widetilde\phi,\mu]$).
We see from Theorem \ref{thm:main-1} that 
for some $\lambda_0>0$ and 
$\phi,\widetilde\phi\in \Sc$,
\begin{align*}
\ab{
\mf{a}_{N,\ep}^1 - \mf{a}_{N,\ep}^2
}
\le
\ab{
\mf{M}_{N,\ep}
\left[ \phi,\widetilde\phi,\mu \right]^{-1}
\braa{
\mf{J}^{\lambda,1}_{N,\ep} 
\left[ \phi,\widetilde\phi,\mu \right]
-
\mf{J}^{\lambda,2}_{N,\ep} 
\left[ \phi,\widetilde\phi,\mu \right]
}
}
+
C\lambda,
\quad
\lambda\in (0,\lambda_0).
\end{align*} 
Since
\begin{align*}
&
\ab{
\lambda^{-2N-|\ep|}
\vD_\lambda^{2N+|\ep|}
\brab{
\lambda^{-3N-7}
\bra{(S_1 - S_2)(
\lambda^{N+4}
\phi\circ D_{N,\ep}^{\lambda,\mu}),
\widetilde\phi\circ D_{N,\ep}^{\lambda,\mu}
}}}
\\
&\le
C
\lambda^{-2N-|\ep|}
\lambda^{-3N-7}
\nr{S_1 - S_2}
\nr{
\lambda^{N+4}
\phi\circ D_{N,\ep}^{\lambda,\mu}
}_{H^1}^3
\nr{
\widetilde\phi\circ D_{N,\ep}^{\lambda,\mu}
}_{H^1}
\\
&\le
C
\lambda^{-2N-|\ep|}
\lambda^{-3N-7}
\lambda^{3N+12}
\lambda^{-\frac{3}{2}\cdot 4}
\nr{S_1 - S_2}
\\
&\le
C
\lambda^{-2N-|\ep|-1}
\nr{S_1 - S_2},
\end{align*} 
we have
\begin{align*}
\ab{
\mf{a}_{N,\ep}^1 - \mf{a}_{N,\ep}^2
}
\le
C
\brab{
\lambda^{-2N-|\ep|-1}
\nr{S_1 - S_2} + \lambda
}.
\end{align*} 
Let 
$a=(2N+|\ep|+2)^{-1}$ and put 
\begin{align*}
\lambda = 
\min\brab{
\lambda_0,\, \nr{S_1 - S_2}^a}.
\end{align*} 
Then (\ref{ineq:stability}) holds.
By (\ref{ineq:stability}) and 
Proposition \ref{prop:ana} below,
we see that the property (2) holds.
We hence complete the proof.

\appendix
\section{Supplementary Propositions}\label{sec:appendix}
In this section,
we prove supplementary propositions
used above.
For $n\in \N$,
we define the Fourier transform $\F_n$ 
on $L^1(\R^n)$
by 
\begin{align*}
\F_n f(\xi) = 
(2\pi)^{-n/2}
\int_{\R^n} 
e^{-ix\cdot \xi} f(x)
dx,
\quad 
f\in L^1(\R^n), 
\ 
\xi\in \R^n. 
\end{align*}

We first show the analyticity of the Fourier transform of 
a function $f(x)$ 
which exponentially decreases.
\begin{prop}\label{prop:ana}
Let $n\in \N$ and let $A>0$.
Assume that  
a measurable function $f:\R^n\to\C$ 
satisfies   
$f(x)\exp(A|x|)\in L^1(\R_x^n)$.
Then $\F_n f(\xi)$ is analytic on $\R_{\xi}^n$.
Furthermore,
for any $\xi_0 \in \R^n$,
if $\xi\in \R^n$ satisfies  
$|\xi -\xi_0|<A/n$,  
then we have 
\begin{align*}
\F_n f(\xi)
=
\sum_{|\alpha| \ge 0}
\dfrac{\Pxi^\alpha \F_n f(\xi_0)
}{
\alpha!
}
(\xi -\xi_0)^\alpha
.
\end{align*} 
\end{prop}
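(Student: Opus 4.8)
The plan is to prove the statement directly: expand the exponential in the Fourier integral as a multi-indexed power series about $\xi_0$, interchange summation and integration by dominating the series via the decay hypothesis, and read off both the Taylor formula and, consequently, real-analyticity on $\R^n$.

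First I would record the smoothness of $\F_n f$ and the formula for its derivatives. For every multi-index $\alpha$ the function $|x|^{|\alpha|}e^{-A|x|/2}$ is bounded on $\R^n$, so $|x^\alpha f(x)|\le |x|^{|\alpha|}|f(x)|\le C_\alpha e^{A|x|/2}|f(x)|\le C_\alpha e^{A|x|}|f(x)|\in L^1(\R^n)$. Repeated differentiation under the integral sign is then justified by the dominated convergence theorem and yields
\begin{align*}
\Pxi^\alpha \F_n f(\xi)=(2\pi)^{-n/2}\int_{\R^n}(-ix)^\alpha e^{-ix\cdot\xi}f(x)\,dx,\qquad \xi\in\R^n.
\end{align*}

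Next, fix $\xi_0\in\R^n$ and $\xi\in\R^n$ with $|\xi-\xi_0|<A/n$. Writing $e^{-ix\cdot\xi}=e^{-ix\cdot\xi_0}e^{-ix\cdot(\xi-\xi_0)}$ and expanding the second factor by the multinomial series $e^{-ix\cdot(\xi-\xi_0)}=\sum_{|\alpha|\ge 0}(-ix)^\alpha(\xi-\xi_0)^\alpha/\alpha!$, I would substitute into the integral defining $\F_n f(\xi)$ and interchange the sum with the integral. This is legitimate because
\begin{align*}
\sum_{|\alpha|\ge 0}\frac{|x^\alpha|\,|(\xi-\xi_0)^\alpha|}{\alpha!}=\prod_{j=1}^n e^{|x_j|\,|\xi_j-\xi_{0,j}|}=e^{\sum_{j=1}^n|x_j|\,|\xi_j-\xi_{0,j}|}\le e^{n|x|\,|\xi-\xi_0|}\le e^{A|x|},
\end{align*}
so the double series is dominated by $e^{A|x|}|f(x)|\in L^1(\R^n)$ and Fubini's theorem applies; note that bounding each of the $n$ terms in the exponent by $|x_j|\,|\xi_j-\xi_{0,j}|\le|x|\,|\xi-\xi_0|$ is exactly what produces the radius $A/n$. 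Comparing the coefficient of $(\xi-\xi_0)^\alpha$ with the formula of the previous paragraph gives $\F_n f(\xi)=\sum_{|\alpha|\ge 0}\Pxi^\alpha \F_n f(\xi_0)(\xi-\xi_0)^\alpha/\alpha!$ for $|\xi-\xi_0|<A/n$.

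Finally, analyticity of $\F_n f$ on $\R^n$ is then immediate, since the preceding step exhibits, about an arbitrary point $\xi_0\in\R^n$, a power series in $(\xi-\xi_0)$ converging to $\F_n f$ on a ball of fixed positive radius. (Alternatively, $z\mapsto(2\pi)^{-n/2}\int_{\R^n}e^{-ix\cdot z}f(x)\,dx$ is holomorphic on the tube $\{z\in\C^n:|\Imp z|<A\}$ by Morera's theorem, which also yields analyticity on $\R^n$, though with a less transparent radius.) The only point that genuinely requires care is the justification of the term-by-term integration in the second step together with the bookkeeping of constants, so that the radius of convergence comes out as $A/n$ rather than something larger; the remaining steps are routine.
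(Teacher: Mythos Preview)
Your argument is correct. The only place one might quibble is the phrase ``Fubini's theorem'' for the interchange of sum and integral; what you actually use is Tonelli plus dominated convergence (or Fubini for counting measure $\times$ Lebesgue measure), but the estimate $\sum_{\alpha}|x^\alpha(\xi-\xi_0)^\alpha|/\alpha!=\prod_j e^{|x_j|\,|\xi_j-\xi_{0,j}|}\le e^{n|x|\,|\xi-\xi_0|}\le e^{A|x|}$ is exactly what is needed, and it cleanly produces the stated radius $A/n$.

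Your route differs from the paper's. The paper does not expand the exponential; instead it bounds the Taylor coefficients directly via $|\Pxi^\beta\F_n f(\xi_0)|\le(2\pi)^{-n/2}\|fe^{A|x|}\|_{L^1}\|x^\beta e^{-A|x|}\|_{L^\infty}$, computes $\|x^\beta e^{-A|x|}\|_{L^\infty}\le (n/A)^{|\beta|}\prod_j\beta_j^{\beta_j}e^{-\beta_j}$, invokes Stirling's inequality $\beta!\ge\prod_j\beta_j^{\beta_j}e^{-\beta_j}$ to get $|\Pxi^\beta\F_n f(\xi_0)|/\beta!\le C(n/A)^{|\beta|}$, and then uses Taylor's theorem with remainder to identify the sum with $\F_n f(\xi)$. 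Your approach is shorter and avoids both Stirling's formula and the explicit remainder estimate; the paper's approach has the mild advantage of recording a uniform pointwise bound on the Taylor coefficients, though that bound is not used elsewhere in the paper. Either argument proves the proposition as stated.
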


\begin{proof}
Fix $n\in \N$, $\xi_0 \in \R^n$ and $\beta\in \Nz^n$.
Then we see that $\F_n f$ is smooth and that  
\begin{align*}
\ab{\Pxi^\beta \F_n f(\xi_0)}
=
\ab{\F_n (x^\beta f)(\xi_0)}
&\le
(2\pi)^{-n/2}
\int_{\R^n} \ab{x^\beta f(x)}
dx
\\
&\le
(2\pi)^{-n/2}
\nr{f e^{A|x|}}_{L^1(\R^n)}
\nr{x^\beta e^{-A|x|}}_{L^\infty(\R^n)}
\end{align*} 
and 
\begin{align*}
\nr{x^\beta e^{-A|x|}}_{L^\infty(\R^n)}
\le
\prod_{j=1}^n
\sup_{r\ge 0}
\braa{ r^{\beta_j} e^{-Ar/n} }
=
\prod_{j=1}^n
n^{\beta_j}
A^{-\beta_j} {\beta_j}^{\beta_j} e^{-\beta_j}
=
n^{|\beta|}
A^{-|\beta|} 
\prod_{j=1}^n
{\beta_j}^{\beta_j} e^{-\beta_j}.
\end{align*} 
It follows from Stirling's formula that 
\begin{align*}
\beta !
=
\prod_{j=1}^n
\beta_j !
\ge
\prod_{j=1}^n
{\beta_j}^{\beta_j} e^{-\beta_j},
\end{align*} 
and we hence obtain 
\begin{align*}
\ab{\dfrac{\Pxi^\beta \F_n f(\xi_0)
}{
\beta !
}}
\le
C_A
\braa{\dfrac{n}{A}}^{|\beta|},
\end{align*} 
where 
$
C_A
=
(2\pi)^{-n/2}
\nr{f e^{A|x|}}_{L^1(\R^n)}
$
is a positive constant independent of $\beta$.
Therefore,
for any 
$\xi \in \R^n$ with $|\xi -\xi_0|<A/n$,
we have 
\begin{align*}
\sum_{|\alpha|\ge 0}
\ab{\dfrac{\Pxi^\alpha \F_n f(\xi_0)
}{
\alpha!
}
(\xi -\xi_0)^\alpha
}
&\le
C_A
\sum_{|\alpha|\ge 0}
\brab{
\braa{\dfrac{n}{A}}
\ab{\xi -\xi_0}
}^{|\alpha|}
\\
&=
C_A
\sum_{m=0}^\infty
\sum_{|\alpha|= m}
\brab{
\braa{\dfrac{n}{A}}
\ab{\xi -\xi_0}
}^m
\\
&=
C_A
\sum_{m=0}^\infty
\#\brab{ \alpha\in \Nz^n;\ |\alpha|=m }
\times
\brab{
\braa{\dfrac{n}{A}}
\ab{\xi -\xi_0}
}^m
\\
&\le 
C_A
\sum_{m=0}^\infty
(m+n-1)^{n-1}
\times
\brab{
\braa{\dfrac{n}{A}}
\ab{\xi -\xi_0}
}^m
<
\infty.
\end{align*}
Let $N\in \N$.
Then we obtain 
\begin{align*}
&
\ab{
\sum_{0\le |\alpha| \le N}
\dfrac{\Pxi^\alpha \F_n f(\xi_0)
}{
\alpha!
}
(\xi -\xi_0)^\alpha
-
\F_n f(\xi)
} 
\\
&\le 
\sum_{|\alpha| = N+1}
\sup_{\theta \in [0,1]}
\ab{
\dfrac{\Pxi^\alpha 
\F_n f\braa{
(1-\theta) \xi + \theta \xi_0}
}{
\alpha!
}
(\xi -\xi_0)^\alpha
}
\\
& \le
C_A(N+n)^{n-1}
\brab{
\braa{\dfrac{n}{A}}
\ab{\xi -\xi_0}
}^{N+1}
\to 0
\quad
\text{as $N\to \infty$.}
\end{align*}
Therefore, 
we see that $\F_n f$ is analytic.
We hence complete the proof.
\end{proof}

In Remark \ref{rem:phi} above,
we introduce an example of data 
$\phi$ and $\widetilde\phi$ 
satisfying conditions (C1)--(C3).
We now prove that 
the example ($\phi$, $\widetilde\phi$) actually satisfies (C3).
\begin{prop}\label{prop:C3}
Let $\ep \in \{ 0,1 \}^3$ with $|\ep|\neq 0$.
Fix $\varphi_j \in \Sc(\R)\setminus \{ 0 \}$ 
($j=1,2,3$)
which are even.
Define $\phi(x)$ and $\widetilde\phi(x)$ 
($x=(x_1,x_2,x_3)\in \R^3$) by 
\begin{align*}
\phi(x_1,x_2,x_3)
=
\varphi_1(x_1) \varphi_2(x_2) \varphi_3(x_3)
\quad
\text{and}
\quad
\widetilde\phi(x)
=
\Px^\ep \phi(x),
\end{align*} 
respectively.
Then we have for any $\alpha\in \Nz^3$,
\begin{align}\label{neq}
\int_{\R^{1+3}}
\xi^{2\alpha + \ep}
\Phi_{\ep}\braa{\phi,\widetilde\phi;t,\xi}
d(t,\xi)
\neq
0.
\end{align} 
\end{prop}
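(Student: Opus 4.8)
The plan is to use the special tensor-product form of the data together with the fact that $U_\ep(t)$ acts in only one variable, so that the integral in \eqref{neq} factors into three one-dimensional integrals, each of which will be shown to be nonzero by a single integration by parts.

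Write $m=m(\ep)$; since $|\ep|\neq0$ we have $\ep_m=1$. The operator $U_\ep(t)=\F^{-1}\exp(-it\xi_m^2)\F$ acts only in the $x_m$-variable, as the one-dimensional free Schr\"odinger group $e^{it\partial_x^2}$, which commutes with $\partial_x$ and maps $\Sc(\R)$ into itself. Hence, with $\phi=\varphi_1\otimes\varphi_2\otimes\varphi_3$ and $\widetilde\phi=\varphi_1^{(\ep_1)}\otimes\varphi_2^{(\ep_2)}\otimes\varphi_3^{(\ep_3)}$, one gets $U_\ep(t)\phi=\bigotimes_j a_j(t)$ and $U_\ep(t)\widetilde\phi=\bigotimes_j b_j(t)$ with $a_m(t)=e^{it\partial_x^2}\varphi_m$, $b_m(t)=\partial_x a_m(t)$ and $a_j(t)=\varphi_j$, $b_j(t)=\varphi_j^{(\ep_j)}$ for $j\neq m$, all functions in $\Sc(\R)$. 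Since $\F=\F_3$ is multiplicative on tensor products,
\[
\Phi_\ep(\phi,\widetilde\phi;t,\xi)=\prod_{j=1}^3 g_j(t,\xi_j),\qquad g_j(t,\eta)=\F_1\big(|a_j(t)|^2\big)(\eta)\,\overline{\F_1\big(\overline{a_j(t)}\,b_j(t)\big)(\eta)}.
\]
By Proposition \ref{prop:limit} (applicable because the $m$-th exponent $2\alpha_m+1$ is nonzero), $\xi^{2\alpha+\ep}\Phi_\ep(\phi,\widetilde\phi;\cdot)$ is integrable on $\R_t\times\R^3_\xi$, so Fubini gives
\[
\int_{\R^{1+3}}\xi^{2\alpha+\ep}\Phi_\ep(\phi,\widetilde\phi;t,\xi)\,d(t,\xi)=\Big(\prod_{j\neq m}L_j\Big)\int_\R K_m(t)\,dt,
\]
where $L_j=\int_\R\eta^{2\alpha_j+\ep_j}g_j(t,\eta)\,d\eta$ does not depend on $t$ for $j\neq m$, and $K_m(t)=\int_\R\eta^{2\alpha_m+1}g_m(t,\eta)\,d\eta$, whose integrability in $t$ is again the one-dimensional case of Proposition \ref{prop:limit}.

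The heart of the matter is an elementary one-dimensional identity: for $\psi\in\Sc(\R)$, $\varepsilon\in\{0,1\}$ and $p\in\Nz$ with $p\equiv\varepsilon\pmod 2$, set $\kappa_{p,\varepsilon}(\psi)=\int_\R\eta^{p}\F_1(|\psi|^2)(\eta)\,\overline{\F_1(\overline\psi\,\psi^{(\varepsilon)})(\eta)}\,d\eta$. Using $\eta^{p}\F_1 u=(-i)^{p}\F_1(\partial_x^{p}u)$ and Plancherel, $\kappa_{p,\varepsilon}(\psi)=(-i)^{p}\int_\R\partial_x^{p}(|\psi|^2)(x)\,\psi(x)\overline{\psi^{(\varepsilon)}(x)}\,dx$. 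If $\varepsilon=0$, say $p=2r$, repeated integration by parts yields $\kappa_{2r,0}(\psi)=\int_\R\big(\partial_x^{r}|\psi|^2\big)^2\,dx$. If $\varepsilon=1$, say $p=2r+1$, one writes $\psi\overline{\psi'}=\tfrac{1}{2}\partial_x(|\psi|^2)+i\,\Imp(\psi\overline{\psi'})$; the contribution of the second summand is real (since $i(-i)^{2r+1}\in\R$ and the remaining integral is real), while that of the first is $-\tfrac{i}{2}\int_\R\big(\partial_x^{r+1}|\psi|^2\big)^2\,dx$ after one integration by parts, so $\Imp\kappa_{2r+1,1}(\psi)=-\tfrac{1}{2}\int_\R\big(\partial_x^{r+1}|\psi|^2\big)^2\,dx$. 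For $\psi\in\Sc(\R)\setminus\{0\}$ the function $|\psi|^2\in\Sc(\R)\setminus\{0\}$ is not a nonzero polynomial, so each of its derivatives is $\not\equiv0$; hence $\kappa_{2r,0}(\psi)>0$ and $\kappa_{2r+1,1}(\psi)\neq0$.

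To finish: $L_j=\kappa_{2\alpha_j+\ep_j,\ep_j}(\varphi_j)\neq0$ for each $j\neq m$, and $K_m(t)=\kappa_{2\alpha_m+1,1}\big(a_m(t)\big)$ with $a_m(t)=e^{it\partial_x^2}\varphi_m$. Since $e^{it\partial_x^2}$ maps $\Sc(\R)$ into itself and preserves the $L^2$-norm, $a_m(t)\in\Sc(\R)\setminus\{0\}$ for every $t$, whence $\Imp K_m(t)=-\tfrac{1}{2}\int_\R\big(\partial_x^{\alpha_m+1}|a_m(t)|^2\big)^2\,dx\le0$, with strict inequality at $t=0$ and, by continuity, on a neighbourhood of $t=0$; therefore $\Imp\int_\R K_m(t)\,dt<0$ and $\int_\R K_m(t)\,dt\neq0$. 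The product in the factorization above is thus nonzero, which is \eqref{neq}. The only points requiring care are justifying the factorization by Fubini (via Proposition \ref{prop:limit} and $2\alpha_m+1\neq0$) and the sign bookkeeping; the decisive trick is that, after Plancherel, one integration by parts exhibits $\Imp\kappa_{p,1}$ as a strictly negative multiple of an $L^2$-square, which is what forces the whole integral to be nonzero.
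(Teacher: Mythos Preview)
Your proof is correct and follows the same overall scheme as the paper---factor $\Phi_\ep$ into one-dimensional pieces and show each factor is nonzero---but the key one-dimensional computation is carried out differently. The paper stays on the Fourier side and, using the evenness of $\rho$ (so that $\F_1|\rho|^2$ is real), derives the pointwise identity
\[
\Imp \Psi(\rho,\rho';t,\zeta)=-\tfrac{1}{2}\,\zeta\,\Psi(\rho,\rho;t,\zeta),
\]
and then integrates against $\zeta^{2m+1}$. You instead pass to the physical side via Plancherel and a single round of integrations by parts to obtain
\[
\Imp \kappa_{2r+1,1}(\psi)=-\tfrac{1}{2}\int_\R\bigl(\partial_x^{r+1}|\psi|^2\bigr)^2\,dx,
\qquad
\kappa_{2r,0}(\psi)=\int_\R\bigl(\partial_x^{r}|\psi|^2\bigr)^2\,dx,
\]
valid for \emph{any} $\psi\in\Sc(\R)\setminus\{0\}$. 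This is slightly more elementary and strictly more general: your argument never uses the evenness of the $\varphi_j$, so you have in fact shown that (C3) holds for arbitrary tensor-product data $\phi=\varphi_1\otimes\varphi_2\otimes\varphi_3$ with $\varphi_j\in\Sc(\R)\setminus\{0\}$ and $\widetilde\phi=\partial_x^\ep\phi$. Two minor remarks: (i) your appeal to ``the one-dimensional case of Proposition~\ref{prop:limit}'' for the integrability of $K_m$ in $t$ is not literally a statement of the paper, but the proof of Propositions~\ref{prop:decay} and~\ref{prop:limit} transfers verbatim to one dimension; (ii) your inequality $\Imp K_m(t)\le 0$ is in fact strict for \emph{every} $t$, since $a_m(t)=e^{it\partial_x^2}\varphi_m\in\Sc(\R)\setminus\{0\}$ for all $t$, which simplifies the last step.
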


\begin{proof}
We consider only the case $\ep = (1,0,1)$ 
because other cases are proved similarly.
Let $\rho, \widetilde\rho \in \Sc(\R)$.
Then we have the following properties:
\begin{enumerate}[(i)]
  \item 
For any $\zeta\in \R$,
it follows that 
$
\zeta \braa{\F_1 \rho}(\zeta) = 
-i \F_1\braa{\rho^\prime}(\zeta) 
$.
  \item 
If $\rho$ is odd, 
then 
$-i\F_1 \Rep \rho = \Imp \F_1 \rho$.
  \item 
If $\rho$ is even, 
then 
$\F_1 |\rho|^2 $
is a real-valued function.
\end{enumerate}
We define the function 
$\Psi(\rho,\widetilde\rho;t,\zeta)$
($t,\zeta\in \R$) by 
\begin{align*}
\Psi\braa{\rho,\widetilde\rho;t,\zeta}
=
\F_1\braa{ \ab{U_1(t)\rho}^2 }(\zeta)
\times
\overline{
\F_1\braa{ 
\overline{
U_1(t)\rho
}
\times
U_1(t)\widetilde\rho 
}(\zeta)},
\end{align*}  
where 
$U_1(t)=\F_1^{-1} e^{-it\zeta^2} \F_1$.
If $\rho$ is even,  
then it follows from (i)--(iii) that 
\begin{align*}
\zeta \Psi\braa{\rho,\rho;t,\zeta}
&=
\F_1\braa{ \ab{U_1(t)\rho}^2 }(\zeta)
\times
\overline{
\zeta
\F_1\braa{ 
\overline{
U_1(t)\rho
}
\times
U_1(t)\rho 
}(\zeta)}   
\\
&=
\F_1\braa{ \ab{U_1(t)\rho}^2 }(\zeta)
\times
\overline{
(-i)
\F_1\braa{ 
\overline{
U_1(t)\rho
}
\times
U_1(t)\rho 
}^\prime(\zeta)}
\\
&=
2
\F_1\braa{ \ab{U_1(t)\rho}^2 }(\zeta)
\times
\overline{
(-i)
\F_1
\Rep
\braa{ 
\overline{
U_1(t)\rho
}
\times
U_1(t)\rho^\prime 
}(\zeta)}
\\
&=
2
\F_1\braa{ \ab{U_1(t)\rho}^2 }(\zeta)
\times
\overline{
\Imp \F_1 
\braa{ 
\overline{
U_1(t)\rho
}
\times
U_1(t)\rho^\prime 
}(\zeta)}
\\
&=
-2 \Imp
\brab{
\F_1\braa{ \ab{U_1(t)\rho}^2 }(\zeta)
\times
\overline{
\F_1 
\braa{ 
\overline{
U_1(t)\rho
}
\times
U_1(t)\rho^\prime 
}(\zeta)}}
\\
&=
-2 \Imp
\Psi\braa{\rho,\rho^\prime;t,\zeta},
\quad
t,\zeta\in \R .
\end{align*} 
Hence we have 
\begin{align}\label{id:Psi}
\Imp \Psi\braa{\rho,\rho^\prime;t,\zeta}
=
-
\dfrac{1}{2}
\zeta \Psi\braa{\rho,\rho;t,\zeta},
\quad
t,\zeta\in \R.
\end{align} 
Let $m\in \Nz$. 
As in the proof of Proposition \ref{prop:decay},
we obtain
\begin{align*}
\Psi\braa{\rho,\rho^\prime;t,\zeta} \le
\dfrac{C\bra{\zeta}^{-m-2}}{1+t^2 \zeta^2},
\quad
(t,\zeta)\in \R\times\R .
\end{align*} 
Therefore, 
we see that 
$
\zeta^{2m+1}
\Psi\braa{\rho,\rho^\prime;t,\zeta}
$
and 
$
\zeta^{2m+1}
\Psi\braa{\rho,\rho^\prime;0,\zeta}
$
are integrable on 
$\R_t\times \R_\zeta$ and $\R_\zeta$,
respectively.
Furthermore,
if $\rho$ is nonzero,
then we have 
\begin{align}\label{nonzero}
\int_\R 
\zeta^{2m+2}
\Psi\braa{\rho,\rho;0,\zeta}
d\zeta 
>0.
\end{align} 
Therefore, 
if $\rho$ is even and nonzero,
then we see from 
(\ref{id:Psi}) and (\ref{nonzero}) that 
\begin{align}
\Imp
\int_{\R^{1+1}} 
\zeta^{2m+1}
\Psi\braa{\rho,\rho^\prime;t,\zeta}
d(t,\zeta) 
&=
-
\dfrac{1}{2}
\int_{\R^{1+1}} 
\zeta^{2m+2}
\Psi\braa{\rho,\rho;t,\zeta}
d(t,\zeta) 
\neq 0,\label{nonzero-1}\\
\Imp
\int_\R 
\zeta^{2m+1}
\Psi\braa{\rho,\rho^\prime;0,\zeta}
d\zeta
&=
-
\dfrac{1}{2}
\int_\R 
\zeta^{2m+2}
\Psi\braa{\rho,\rho;0,\zeta}
d\zeta 
\neq 0.\label{nonzero-2}
\end{align} 
Let 
$\alpha=(\alpha_1,\alpha_2,\alpha_3)\in \Nz^3$.
Then it follows from the equality 
\begin{align*}
\int_{\R^{1+3}}
&
\xi^{2\alpha + \ep}
\Phi_{\ep}\braa{\phi,\widetilde\phi;t,\xi}
d(t,\xi) 
\\
&=
\int_{\R^{1+1}} 
\xi_1^{2\alpha_1+1}
\Psi\braa{\varphi_1,\varphi_1^\prime;t,\xi_1}
d(t,\xi_1) 
\\
&\quad
\times
\int_\R 
\xi_2^{2\alpha_2}
\Psi\braa{\varphi_2,\varphi_2;0,\xi_2}
d\xi_2
\times
\int_\R 
\xi_3^{2\alpha_3+1}
\Psi\braa{\varphi_3,\varphi_3^\prime;0,\xi_3}
d\xi_3
\end{align*} 
and (\ref{nonzero})--(\ref{nonzero-2})
that (\ref{neq}) holds.
\end{proof}

Finally,
we show the following inequality 
used in the proof of Theorem \ref{thm:main-1}:
\begin{prop}\label{prop:vD}
Fix $\lambda_0>0$ and $L\in \Nz$.
Let 
$h\in C^{L+1}((0,\lambda_0))$.
If
\begin{align}\label{condi:bdd h}
\max_{l=0,\cdots,L+1}
\sup_{\lambda \in (0,\lambda_0)}
\ab{
\dfrac{d^l}{d \lambda^l}
h(\lambda)
}
<\infty,
\end{align} 
then we have 
\begin{align*}
\ab{
\dfrac{d^L}{d \lambda^L}h(\lambda)
-
\lambda^{-L}\vD_\lambda^L h(\lambda)
}
\le C\lambda,
\quad
\lambda \in 
\braa{ 0,\dfrac{\lambda_0}{L+1}}
\end{align*} 
for some positive number $C$ 
independent of $\lambda$.
\end{prop}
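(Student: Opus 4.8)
The plan is to compare $\vD_\lambda^L h(\lambda)$ with $\lambda^L h^{(L)}(\lambda)$ by Taylor-expanding each sampled value $h((l+1)\lambda)$ about the base point $\lambda$, and then exploiting the classical finite-difference identities for monomials: the subleading Taylor coefficients get annihilated, while the top-order one reproduces $\lambda^L h^{(L)}(\lambda)$ exactly, leaving only a remainder of size $O(\lambda^{L+1})$. Throughout, I restrict to $\lambda\in(0,\lambda_0/(L+1))$, so that each argument $(l+1)\lambda$ ($0\le l\le L$) lies in $(0,\lambda_0)$ and the closed interval $[\lambda,(l+1)\lambda]$ is contained in the open interval $(0,\lambda_0)$; hence hypothesis (\ref{condi:bdd h}) applies there. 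Write $B_k=\sup_{(0,\lambda_0)}\ab{h^{(k)}}<\infty$ for $k=0,\dots,L+1$.

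First, for each $l\in\{0,\dots,L\}$ I would apply Taylor's formula with integral remainder about $\lambda$,
\[
h\braa{(l+1)\lambda}=\sum_{k=0}^{L}\frac{(l\lambda)^k}{k!}\,h^{(k)}(\lambda)+\frac{1}{L!}\int_\lambda^{(l+1)\lambda}\braa{(l+1)\lambda-s}^L h^{(L+1)}(s)\,ds,
\]
insert this into the definition of $\vD_\lambda^L h(\lambda)$, and interchange the two finite sums. The coefficient multiplying $\tfrac{\lambda^k}{k!}h^{(k)}(\lambda)$ is then $\sum_{l=0}^{L}(-1)^{L-l}\binom{L}{l}l^{\,k}$, i.e.\ the value at $x=0$ of the $k$-th derivative of $(e^x-1)^L$; since $(e^x-1)^L=x^L+O(x^{L+1})$ near the origin, this coefficient vanishes for $0\le k\le L-1$ and equals $L!$ for $k=L$. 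Consequently all Taylor terms with $k<L$ cancel, the $k=L$ term contributes exactly $\lambda^L h^{(L)}(\lambda)$, and we are left with
\[
\vD_\lambda^L h(\lambda)-\lambda^L h^{(L)}(\lambda)=\sum_{l=0}^{L}\frac{(-1)^{L-l}}{l!\,(L-l)!}\int_\lambda^{(l+1)\lambda}\braa{(l+1)\lambda-s}^L h^{(L+1)}(s)\,ds.
\]

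Finally I would bound the remainder crudely: each integral is at most $B_{L+1}(l\lambda)^{L+1}/(L+1)\le B_{L+1}(L\lambda)^{L+1}/(L+1)$ in modulus, so the right-hand side above has absolute value at most $\tfrac{2^L B_{L+1}(L\lambda)^{L+1}}{(L+1)!}=:C\lambda^{L+1}$, with $C$ depending only on $L$ and $B_{L+1}$. Dividing by $\lambda^L$ yields $\ab{h^{(L)}(\lambda)-\lambda^{-L}\vD_\lambda^L h(\lambda)}\le C\lambda$ for $\lambda\in(0,\lambda_0/(L+1))$, which is the assertion.

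The argument is essentially bookkeeping; the only point that requires genuine care is the combinatorial identity that kills the sub-top Taylor terms and normalizes the top one, and this is standard. A shorter but less self-contained alternative would be to invoke the mean-value form of the divided difference, $\lambda^{-L}\vD_\lambda^L h(\lambda)=h^{(L)}(\theta_\lambda)$ for some $\theta_\lambda\in(\lambda,(L+1)\lambda)$, and then estimate $\ab{h^{(L)}(\theta_\lambda)-h^{(L)}(\lambda)}\le B_{L+1}\ab{\theta_\lambda-\lambda}\le L\,B_{L+1}\lambda$; this gives the same conclusion.
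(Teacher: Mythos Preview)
Your argument is correct and follows essentially the same route as the paper: Taylor-expand $h((l+1)\lambda)$ about $\lambda$, use the identity $\sum_{l=0}^L(-1)^{L-l}\binom{L}{l}l^{\,k}=L!\,\delta_{k,L}$ to kill the lower-order terms, and bound the order-$(L+1)$ remainder by $C\lambda^{L+1}$. The only cosmetic differences are that you justify the combinatorial identity via the generating function $(e^x-1)^L$ whereas the paper invokes the $L$-th forward difference of $y\mapsto y^L$, and your closing mean-value alternative is an additional shortcut not present in the paper.
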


\begin{proof}
Fix $\lambda\in (0,\lambda_0/(L+1))$. 
Then 
we see from 
Taylor's theorem 
and (\ref{condi:bdd h})
that for any $l=0,1,\cdots,L$,
\begin{align*}
\ab{
h((l+1)\lambda )
-
\sum_{k=0}^L 
\frac{(l\lambda)^k}{k!}
h^{(k)}(\lambda)
}
\le
C\lambda^{L+1}.
\end{align*}  
Since 
\begin{align*}
\sum_{l=0}^L
\frac{(-1)^{L-l}L!}{l! (L-l)!}
\sum_{k=0}^L 
\frac{(l\lambda)^k}{k!}
h^{(k)}(\lambda)
=
L!
\sum_{k=0}^L 
\left(
\sum_{l=0}^L
\frac{(-1)^{L-l}l^k}{l! (L-l)!}
\right)
\frac{\lambda^k}{k!}
h^{(k)}(\lambda),
\end{align*} 
we obtain 
\begin{align*}
\ab{
\varDelta_\lambda^L h(\lambda)
-
L!
\sum_{k=0}^L 
\left(
\sum_{l=0}^L
\frac{(-1)^{L-l}l^k}{l! (L-l)!}
\right)
\frac{\lambda^k}{k!}
h^{(k)}(\lambda)
}
\le
C \lambda^{L+1}.
\end{align*} 
Therefore, 
it suffices to prove the following equality:
\begin{align}\label{magic}
\sum_{l=0}^L
\frac{(-1)^{L-l}l^k}{l! (L-l)!}
=
\left\{
  \begin{array}{ll}
0&\text{if $k=0,1,\cdots,L-1$},\\
1&\text{if $k=L$}.\\
  \end{array}
\right.
\end{align}
Let $g(y)=y^L$ ($y\in \R$).
Then the $L$-th forward difference of $g$ 
satisfies
\begin{align*}
\sum_{l=0}^L
\frac{(-1)^{L-l}L!}{l! (L-l)!}
g(y+l)
=
L!
\end{align*} 
and 
\begin{align*}
\sum_{l=0}^L
\frac{(-1)^{L-l}L!}{l! (L-l)!}
g(y+l)
=
L!
\sum_{k=0}^L 
\frac{L!}{k! (L-k)!}
\braa{
\sum_{l=0}^L
\frac{(-1)^{L-l}l^k}{l! (L-l)!}
}
y^{L-k}.
\end{align*} 
Since $y$ is arbitrary, 
we have (\ref{magic}),
which completes the proof.

\end{proof}


\end{document}